\title%
[Spherical inversion 
for a small $K$-type on $G_2$]
{
Spherical inversion 
for a small $K$-type on the split real  Lie group of type $G_2$
}
\author{Hiroshi \textsc{Oda}}
\address{Faculty of Engineering, Takushoku University,
815-1 Tatemachi, Hachioji, Tokyo 193-0985, Japan}
\email{hoda@la.takushoku-u.ac.jp}
\author{Nobukazu \textsc{Shimeno}}
\address{School of Science \& Technology, Kwansei Gakuin University, 
2-1 Gakuen, Sanda, Hyogo 669-1337, Japan}
\email{shimeno@kwansei.ac.jp}
\subjclass[2010]{22E45, 43A90}
\keywords{small $K$-type, spherical function, spherical transform}
\dedicatory{Dedicated to Professor~Toshio~Oshima on the~occasion of his~70-th~birthday} 
\date{}
\newcommand{\bbC}{{\mathbb C}}
\newcommand{\bsm}{\boldsymbol m}
\newcommand{\reg}{{\mathrm{reg}}}
\DeclareMathOperator{\pr}{pr}
\DeclareMathOperator{\SU}{SU}
\newcommand{\simarrow}{\xrightarrow{\smash[b]{\lower 0.7ex\hbox{$\sim$}}}}
\DeclareMathOperator*{\Res}{Res}
\theoremstyle{plain}
 \newtheorem{thm}{Theorem}[section]
 \newtheorem{cor}[thm]{Corollary}
 \newtheorem{lem}[thm]{Lemma}
\theoremstyle{definition}
\theoremstyle{remark}
\begin{document}
\begin{abstract}
We give an explicit formula for the Harish-Chandra $c$-function for a small $K$-type  
on a split  real  Lie group of type $G_2$. As an application we give an explicit formula 
for spherical inversion for this small $K$-type. 
\end{abstract}

\maketitle

\section{Introduction}
Harmonic analysis on a Riemannian symmetric space $G/K$ of the noncompact type 
is by now well developed (cf. \cite{Hel2}). 
A  natural extension is 
to study harmonic analysis on homogeneous vector bundles over $G/K$. 
One of fundamental problems 
in harmonic analysis is to establish the Plancherel theorem. 
Harish-Chandra establishes a general theory of 
 the Eisenstein integrals and 
the Plancherel theorem for noncompact real semisimple Lie groups (cf. \cite{HC76,Kn0,Wal,War}). 
The Plancherel theorem on a homogeneous vector bundle over $G/K$ 
associated with an irreducible representation $\pi$ of $K$ follows from 
Harish-Chandra's result by restricting the Plancherel measure to 
$K$-finite functions of type $\pi$. But 
it is a highly nontrivial and important problem to determine the Plancherel measure 
on the associated vector bundle as 
explicitly as in the case of the trivial $K$-type. 
There are several studies in this direction (cf. \cite{Camporesi2, CP, vDP, 
FJ, Hec, OS, SPlancherel, Shyperbolic}). 

In our previous paper \cite{OS}, we study elementary 
spherical functions on $G$ with a 
small $K$-type $\pi$ (in the sense of Wallach \cite[\S 11.3]{Wal}). 
Namely, we identify elementary spherical functions with the Heckman-Opdam 
hypergeometric function (cf. \cite{Hec, Op:lecture}) and apply the 
inversion formula and the Plancherel formula 
for the hypergeometric Fourier transform (\cite{Op:Cherednik}) 
to obtain the inversion formula and the Plancherel formula for the 
$\pi$-spherical transform. But there is an exception in \cite{OS}. 
Namely, for a certain small $K$-type of a noncompact Lie group 
of type $G_2$, elementary spherical functions can not be expressed by 
the Heckman-Opdam hypergeometric function. 

In this paper we give a complete treatment of harmonic analysis of 
$\pi$-spherical transform for each small $K$-type $\pi$ of  
 $G_2$. Namely, we give an explicit formula for the Harish-Chandra $c$-function 
$c^{\pi}(\lambda)$ 
and determine the Plancherel measure explicitly. The most continuous part 
of the Plancherel measure
 is $|c^\pi(\lambda)|^{-2}d\lambda$ on $\sqrt{-1}\mathfrak{a}^*$ and the other 
 spectra with supports of lower dimensions are given explicitly by 
 using residue calculus.  
As indicated by Oshima \cite{Oshima81a} and as was done for one-dimensional $K$-types 
by the second author \cite{SPlancherel}, we could prove the inversion formula for the 
$\pi$-spherical tranform in the case of $G_2$ by 
 extending Rosenberg's method of a proof of the inversion formula in the case of 
 the trivial $K$-type (\cite{R}). Instead of doing this, 
 we utilize general results on the Plancherel theorem and residue calculus on 
 $G$ due to Harish-Chandra and Arthur (cf. \cite{HC76, A, Kn0,Wal}) 
 and devote ourselves to the determination of the Plancherel measure. 

This paper is organized as follows. In Section~2 we give general results 
for elementary $\pi$-spherical functions, the 
Harish-Chandra $c$-function, the inversion formula for $\pi$-spherical transform 
with respect to a small $K$-type $\pi$ 
on a noncompact real semisimple Lie group of finite center. 
 
 In Section~3 we study the case of $G_2$. We give an  explicit formula of 
the $c$-function (Theorem~\ref{thm:cfg2}), 
the  inversion formula, and the Plancherel formula (Theorem~\ref{thm:main}, 
 Corollary~\ref{cor:main}) for each small $K$-type. 
In particular, they cover the small $K$-type that is not treated in \cite{OS}. 

\section{Elementary spherical functions 
for small $K$-types}

\subsection{Notation}
Let $\mathbb{N}$ denote the set of the nonnegative integers. 
Let $G$ be a non-compact connected real semisimple Lie group of 
finite center and $K$ a maximal compact subgroup of $G$. 
Let $e$ denote the identity element of $G$. 
Lie algebras of Lie groups $G,\,K$, etc. are denoted by the corresponding German letter  $\mathfrak{g},\,\mathfrak{k}$, etc.  
Let $\mathfrak{g}=\mathfrak{k}+\mathfrak{p}$ be 
the Cartan decomposition and $\mathfrak{a}$ a maximal abelian subspace 
of $\mathfrak{p}$. Let $\varSigma$ denote the root system for $(\mathfrak{g},\mathfrak{a})$. 
For $\alpha\in\varSigma$, let $\mathfrak{g}_\alpha$ denote the corresponding 
root space and $\boldsymbol{m}_\alpha=\dim \mathfrak{g}_\alpha$. 
Fix a 
positive system $\varSigma^+\subset \varSigma$ and let 
$\varPi=\{\alpha_1,\dots,\alpha_r\}$ denote the set of simple roots in $\varSigma^+$. 
Define $\mathfrak{n}=\sum_{\alpha\in\varSigma^+}\mathfrak{g}_\alpha$ and 
$N=\exp \mathfrak{n}$. Then we have the Iwasawa decomposition $G=K\exp\mathfrak{a}\,N$. 
Define $\rho=\frac12\sum_{\alpha\in\varSigma^+}\boldsymbol{m}_\alpha \alpha$. 

Let $W$ denote the Weyl group of $\varSigma$ and 
 $s_i$ the reflection across $\alpha_i^\perp$ ($1\leq i\leq r$). 
We have $W\simeq M'/M$, where $M'$ (resp. $M$) is the normalizer (resp. centralizer) 
of $\mathfrak{a}$ in $K$. 

Define
\begin{align*}
& \mathfrak{a}_+
=\{H\in\mathfrak{a}\,|\,\alpha(H)>0 \text{ for all } \alpha\in\varSigma^+\},
\\
& \mathfrak{a}_\text{reg}
=\{H\in\mathfrak{a}\,|\,\alpha(H)\not=0 \text{ for all } \alpha\in\varSigma^+\}.
\end{align*}
We have the Cartan decomposition $G=K\exp \overline{\mathfrak{a}_+}\,K$. 

Let $\langle\,\,,\,\,\rangle$ denote the inner product on $\mathfrak{a}^*$ 
induced by the Killing form on $\mathfrak{g}$ and $||\,\,||$ the corresponding norm. 
Define
\[
\mathfrak{a}_+^*
=\{\lambda\in\mathfrak{a}^*\,|\,\langle\lambda,\alpha\rangle>0 \text{ for all } \alpha\in \varSigma^+\}.
\]

\subsection{Elementary $\pi$-spherical function}

In this subsection, we review elementary $\pi$-spherical functions for small $K$-types 
according to \cite{OS}. 

Let $(\pi,V)$ be a small $K$-type, that is, $\pi|_M$ is irreducible. 
We call an $\text{End}_\mathbb{C} V$-valued function $f$ on $G$ satisfying 
\[
f(k_1gk_2)=\pi(k_2^{-1})f(g)\pi(k_1^{-1})\quad (k_1,\,k_2\in K,\,g\in G)
\]
a $\pi$-spherical function. 

Let $\boldsymbol{D}^\pi$ denote the algebra of the invariant differential operators on the homogeneous 
vector bundle over $G/K$ associated with $\pi$. 
Let $U(\mathfrak{g}_\mathbb{C})$ denote the universal enveloping algebra of 
$\mathfrak{g}_\mathbb{C}=\mathfrak{g}\otimes_\mathbb{R}\mathbb{C}$ and 
$U(\mathfrak{g}_\mathbb{C})^K$ the set of the 
$\text{Ad}(K)$-invariant elements in $U(\mathfrak{g}_\mathbb{C})$. 
Let $J_{\pi^*}=\text{ker}\,\pi^*$ in $U(\mathfrak{k}_\mathbb{C})$. 
We have 
\[
\boldsymbol{D}^\pi\simeq U(\mathfrak{g}_\mathbb{C})^K/
U(\mathfrak{g}_\mathbb{C})^K\cap U(\mathfrak{g}_\mathbb{C})J_{\pi^*}.
\] 

Let $S(\mathfrak{a}_\mathbb{C})$ denote the symmetric algebra 
of $\mathfrak{a}_\mathbb{C}=\mathfrak{a}\otimes_\mathbb{R}\mathbb{C}$ and 
$S(\mathfrak{a}_\mathbb{C})^W$ the set of the $W$-invariant elements in $S(\mathfrak{a}_\mathbb{C})$. 
There exists an algebra homomorphism 
\[
\gamma^\pi:U(\mathfrak{g}_\mathbb{C})^K\rightarrow S(\mathfrak{a}_\mathbb{C})^W
\]
with the kernel $U(\mathfrak{g}_\mathbb{C})^K\cap U(\mathfrak{g}_\mathbb{C})
J_{\pi^*}$ (cf. \cite[Lemma~11.3.2, Lemma~11.3.3]{Wal}). 
Notice that the homomorphism $\gamma^\pi$ is independent of the choice of 
$\varSigma^+$. 
Thus we have the generalized Harish-Chandra isomorphism 
$\gamma^\pi:\boldsymbol{D}^\pi\simarrow S(\mathfrak{a}_\mathbb{C})^W$. 
Therefore, any algebra homomorphism 
from $\boldsymbol{D}^\pi$ to $\mathbb{C}$ is of the form $D\mapsto \gamma^\pi(D)(\lambda)\,\,(D\in\boldsymbol{D}^\pi)$ 
for some $\lambda\in \lower0.8ex\hbox{$W$}\backslash \mathfrak a_\bbC^*$.

For  $\lambda\in \lower0.8ex\hbox{$W$}\backslash \mathfrak a_\bbC^*$ 
there exists a unique 
smooth $\pi$-spherical 
function $f=\phi_\lambda^\pi$ satisfying $f(e)=\text{id}_V$ and 
$Df=\gamma^\pi(D)(\lambda)f\,\,\,(D\in\boldsymbol{D}^\pi)$ (cf. \cite[Theorem~1.4]{OS}). 
We call $\phi_\lambda^\pi$ 
the  
elementary $\pi$-spherical function. 
Since $\boldsymbol{D}^\pi$ contains an elliptic operator, $\phi_\lambda^\pi$ is 
real analytic. Moreover, 
it has an integral representation 
\begin{equation}
\phi^\pi_{\lambda}(g)  =
\int_Ke^{(\lambda-\rho)(H(gk))}\pi(k\kappa(gk)^{-1})dk.  \label{eq:eisenstein}
\end{equation}
Here given $x\in G$, define $\kappa(x)\in K$ and $H(x)\in\mathfrak{a}$ by $x\in \kappa(x)e^{H(x)}N$. 
Notice that $\phi_\lambda^\pi$ is independent of the choice of 
$\varSigma^+$, though the right hand side of \eqref{eq:eisenstein} depends on 
$\varSigma^+$ at first glance. 
Moreover, $\phi_\lambda^\pi$ depends 
holomorphically on $\lambda\in\mathfrak{a}_\mathbb{C}^*$. 

Formula \eqref{eq:eisenstein} is a special case of the integral representations of elementary spherical functions (or more generally the 
{Eisenstein integral}s) given by Harish-Chandra (cf. \cite[\S 6.2.2, \S 9.1.5]{War}, \cite[(42)]{Camporesi2}, \cite[(14.20)]{Kn0}).

\subsection{Harish-Chandra series}

In this subsection, we review the Harish-Chandra expansion of the 
elementary spherical function according to \cite[\S~9.1]{War}. 
We assume $(\pi,V)$ is a small $K$-type. 

Let $C^\infty(G,\pi,\pi)$ denote the space of the 
smooth $\pi$-spherical functions. 
If $f\in C^\infty(G,\pi,\pi)$ then $f|_A$ takes values in $\text{End}_M V\simeq \mathbb{C}$.
Hence we regard 
$\varUpsilon^\pi(f):=f|_A \circ\exp$ as a scalar valued function on $\mathfrak{a}$. 
Let $C^\infty(\mathfrak{a})^W$ denote the space of the $W$-invariant smooth functions on $\mathfrak{a}$.
The restriction map 
$\varUpsilon^\pi$ gives an isomorphism $C^\infty(G,\pi,\pi)\simarrow
C^\infty(\mathfrak{a})^W$ (\cite[Theorem~1.5]{OS}). 

Let $\mathscr R$ be the unital algebra of functions on $\mathfrak a_\reg$
generated by $(1\pm e^{\alpha})^{-1}$ ($\alpha\in\varSigma^+$).
For any $D \in U(\mathfrak g_\bbC)^K$
there exists a unique $W$-invariant 
differential operator $\varDelta^\pi(D) \in \mathscr R \otimes S(\mathfrak a_\bbC)$
such that for any $f \in C^\infty(G,\pi,\pi)$
\[
\varUpsilon^\pi(Df)
=\varDelta^\pi(D)\varUpsilon^\pi(f)
\]
on $\mathfrak a_\reg$ (\cite[Proposition 3.10]{OS}). 
We call $\varDelta^\pi(D)$ the $\pi$-radial part of $D$. The function 
$\Phi=\varUpsilon^\pi(\phi^\pi_\lambda)$ satisfies differential equations
\begin{equation}\label{eqn:derad}
\varDelta^\pi(D)\Phi=\gamma^\pi(D)(\lambda)\Phi\quad 
(D\in U(\mathfrak{g}_\mathbb{C})^K).
\end{equation}

Let $\mathbb{N}\varSigma^+$ denote the set of $\mu\in\mathfrak{a}^*$ of the form 
$\mu=n_1\alpha_1+\cdots +n_r\alpha_r\,\,(n_i\in\mathbb{N})$. 
For $\mu\in\mathbb{N}\varSigma^+\setminus\{0\}$, let 
$\sigma_\mu$ denote the hyperplane
\[
\sigma_\mu=\{\lambda\in\mathfrak{a}_\mathbb{C}^*\,|\,
\langle 2\lambda-\mu,\mu\rangle=0\}.
\]
If $\lambda\not\in\sigma_\mu$ for any $\mu\in\mathbb{N}\varSigma^+\setminus\{0\}$, 
then there exists a unique convergent series solution
\begin{equation}\label{eqn:hcs1}
\Phi_\lambda(H)=e^{(\lambda-\rho)(H)}\sum_{\mu\in\mathbb{N}\varSigma^+}
\Gamma_\mu(\lambda)e^{-\mu(H)}
\quad (H\in \mathfrak{a}_+)
\end{equation}
of \eqref{eqn:derad} 
with $\Gamma_\mu(\lambda)\in\mathbb{C}$ and $\Gamma_0(\lambda)=1$. 
This is a special case of \cite[Theorem~9.1.4.1]{War}. 

By using differential equations \eqref{eqn:derad}, apparent singularities of $\Phi_\lambda(H)$ 
as a function of $\lambda$ is removable unless $\mu=n\alpha$ for some $n\in \mathbb{Z}_{>0}$ and 
$\alpha\in\varSigma^+$ (cf. \cite[Corollary~6.3]{A}, see also 
\cite[Lemma~6.5]{Op:lecture} and \cite[Proposition~7.5]{Hec}). 
For $\mu=n\alpha$, $\lambda\not\in \sigma_{\mu}$ if and only if $\langle\lambda,\alpha^\vee\rangle
\not=n$. Here $\alpha^\vee =2\alpha/\langle\alpha,\alpha\rangle$. 
Thus $\Phi_\lambda$ is defined if $\langle\lambda,\alpha^\vee\rangle\not\in \mathbb{Z}_{>0}$ for 
all $\alpha\in\varSigma^+$. 

If 
$\langle\lambda,\alpha^\vee\rangle\not\in \mathbb{Z}$ for all $\alpha\in\varSigma^+$, then 
$\{\Phi_{w\lambda}\,|\,w\in W\}$ forms a basis of the solution space 
of \eqref{eqn:derad} on $\mathfrak{a}_+$. Thus  
$\varUpsilon^\pi(\phi_\lambda^\pi)$ is a linear combination of 
$\Phi_{w\lambda}\,\,(w\in W)$. 
Since $\phi_{w\lambda}^\pi=\phi_\lambda^\pi$, there exists a constant 
$c^\pi(\lambda)$ such that
\begin{equation}\label{eqn:hcs2}
\varUpsilon^\pi(\phi_\lambda^\pi)(H)
=\sum_{w\in W}c^\pi(w\lambda)\Phi_{w\lambda}(H)
\quad (H\in \mathfrak{a}_+).
\end{equation}

\subsection{Harish-Chandra ${c}$-function}

In this subsection, 
we review 
the Harish-Chandra $c$-function. We refer to \cite{Schiffman}, 
 \cite[\S 9.1.6]{War}, \cite[Chapter 8]{Wallach73}, 
 and \cite[\S 5]{Sekiguchi} for details. 

Let $H\in\mathfrak{a}_+$ and  
$\lambda\in\mathfrak{a}_\mathbb{C}^*$ satisfying $\text{Re}\,\lambda\in \mathfrak{a}_+^*$. 
The leading 
coefficient $c^\pi(\lambda)$ 
of $\varUpsilon^\pi(\phi^\pi_\lambda)$ at infinity in $A_+=\exp\mathfrak{a}_+$ is 
given by the Harish-Chandra ${c}$-function 
(cf. \cite[Theorem 9.1.6.1]{War}, \cite[Theorem~14.7, (14.29)]{Kn0}):
\begin{align}
& \lim_{t\to\infty}e^{t(-\lambda+\rho)(H)}\varUpsilon^\pi(\phi^\pi_\lambda)(e^{tH})={c}^\pi(\lambda), 
\label{eq:limcf1} \\
& {c}^\pi(\lambda)=\int_{\bar{N}}e^{-(\lambda+\rho)(H(\bar{n}))}\pi(\kappa(\bar{n}))d\bar{n}, 
\label{eq:cfgroup}
\end{align}
where the Haar measure on $\bar{N}$ is normalized so that 
\[
\int_{\bar{N}}e^{-2\rho(H(\bar{n}))}d\bar{n}=1.
\]
The integral (\ref{eq:cfgroup}) 
converges absolutely for $\text{Re}\,\lambda\in \mathfrak{a}_+^*$ and extends to a meromorphic function 
on $\mathfrak{a}_\mathbb{C}^*$. 
Notice that ${c}^\pi(\lambda)\in \text{End}_M V\simeq \mathbb{C}$. 

Define 
\[
\varSigma_0=\{\alpha\in\varSigma\,|\,\tfrac12\alpha\not\in\varSigma\}
\]
and $\varSigma_0^+=\varSigma_0\cap \varSigma^+$. 
For $\alpha\in\varSigma_0^+$ let $\mathfrak{g}_{(\alpha)}$ denote the Lie subalgebra of $\mathfrak{g}$ 
generated by $\mathfrak{g}_\alpha$ and $\mathfrak{g}_{-\alpha}$. 
Put $\mathfrak{k}_\alpha=\mathfrak{k}\cap \mathfrak{g}_{(\alpha)}$, $\mathfrak{p}_\alpha=\mathfrak{p}\cap \mathfrak{g}_{(\alpha)}$, 
 $\mathfrak{a}_\alpha=\mathfrak{a}\cap \mathfrak{g}_{(\alpha)}$, $\mathfrak{n}_\alpha=\mathfrak{g}_\alpha+\mathfrak{g}_{2\alpha}$, and 
$\bar{\mathfrak{n}}_{\alpha}=\theta\mathfrak{n}_\alpha$. 
For $\alpha\in \varSigma_0^+$, let $G_\alpha,\,K_\alpha,\,A_\alpha,\,N_\alpha$, and $\bar{N}_{\alpha}$ denote the analytic subgroups of $G$ 
corresponding to $\mathfrak{g}_{(\alpha)},\,\mathfrak{k}_\alpha,\,
\mathfrak{a}_\alpha,\,\mathfrak{n}_\alpha$, and $\bar{\mathfrak{n}}_{\alpha}$, respectively. 
We have the Iwasawa decomposition $G_\alpha=K_\alpha A_\alpha N_\alpha$. Put 
$\rho_\alpha=\frac12(\boldsymbol{m}_\alpha+2\boldsymbol{m}_{2\alpha})\alpha$. 
Let $d\bar{n}_\alpha$ denote the Haar measure on $\bar{N}_\alpha$ 
normalized so that 
\[
\int_{\bar{N}_\alpha}e^{-2\rho_\alpha(H(\bar{n}_\alpha))}d\bar{n}_\alpha=1. 
\]

Let $w^*\in W$ be the element such that $w^*(\varSigma^+)=-\varSigma^+$. 
Let $w^*=s_{i_m}\cdots s_{i_2}s_{i_1}$ be a reduced expression, where $m$ denotes 
the length of $w^*$ and $1\leq i_k\leq r\,\,(1\leq k\leq m)$. Put 
$\beta_k=s_{i_1}\cdots s_{i_{k-1}}\alpha_{i_k}$. Then we have 
$\varSigma_0^+=\{\beta_1,\beta_2,\dots,\beta_m\}$ (cf. \cite[Ch. IV Corollary 6.11]{Hel2}). 
We have the decomposition $\bar{N}=\bar{N}_{\beta_1}\cdots \bar{N}_{\beta_m}$,  
the product map being a diffeomorphism. 
Moreover, there exists a positive constant $c_0$ such that 
\begin{equation}\label{eqn:normalize}
d\bar{n}=c_0\, d\bar{n}_{\beta_1}\cdots d\bar{n}_{\beta_m}.
\end{equation}

For $\alpha\in\varSigma_0^+$ define
\[
c_\alpha^\pi(\lambda)=\int_{\bar{N}_\alpha}e^{-(\lambda+\rho_\alpha)(H(\bar{n}_\alpha))}
\pi(\kappa(\bar{n}_\alpha))d\bar{n}_\alpha.
\]

We have the following product formula (\cite[Theorem 1.2]{Schiffman}, 
\cite[\S  8.11.6]{Wallach73}, \cite[\S 9.1.6]{War}, \cite[Theorem 5.1]{Sekiguchi}).

\begin{thm}\label{thm:prod}
$c^\pi(\lambda)=c_0 \,c_{\beta_1}^\pi\!(\lambda)\cdots c_{\beta_m}^\pi\!(\lambda)$.
\end{thm}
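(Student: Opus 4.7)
The plan is to prove the factorization by a Gindikin-Karpelevic style argument applied to the defining integral \eqref{eq:cfgroup}, adapted to the small-$K$-type setting. First I substitute the diffeomorphism $\bar N \simeq \bar N_{\beta_1}\times\cdots\times \bar N_{\beta_m}$ given by the product map, together with the measure relation \eqref{eqn:normalize}, to rewrite
\[
c^\pi(\lambda) = c_0 \int_{\bar N_{\beta_1}}\!\!\cdots\!\int_{\bar N_{\beta_m}} e^{-(\lambda+\rho)(H(\bar n_{\beta_1}\cdots \bar n_{\beta_m}))}\, \pi(\kappa(\bar n_{\beta_1}\cdots \bar n_{\beta_m}))\, d\bar n_{\beta_m}\cdots d\bar n_{\beta_1},
\]
and then peel off one rank-one factor $c_{\beta_k}^\pi(\lambda)$ at a time by induction on $m$.

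For the inductive step, take $\bar n_{\beta_1}\in G_{\beta_1}$ and use its (rank-one) Iwasawa decomposition $\bar n_{\beta_1}=\kappa(\bar n_{\beta_1})\exp(H(\bar n_{\beta_1})) n_1$ with $\kappa(\bar n_{\beta_1})\in K_{\beta_1}\subset K$ and $n_1\in N_{\beta_1}\subset N$. Substituting this into the product $\bar n_{\beta_1}\bar n_{\beta_2}\cdots \bar n_{\beta_m}$ and exploiting that the ordering $\beta_1,\ldots,\beta_m$ comes from a reduced expression of $w^*$---so that $\bar N_{\beta_2}\cdots \bar N_{\beta_m}$ is a subgroup of $\bar N$, normalized by $A$ and admitting a controlled $N_{\beta_1}$-action---one can move $n_1$ past the remaining factors and then perform a measure-preserving change of variables, namely conjugation by $\kappa(\bar n_{\beta_1})$. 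This decouples the $\bar n_{\beta_1}$-integral, yielding $c_{\beta_1}^\pi(\lambda)$ times an integral of exactly the same form with $m$ replaced by $m-1$, to which the induction hypothesis applies.

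The smallness hypothesis on $\pi$ is crucial here: since $\pi|_M$ is irreducible, Schur's lemma gives $\pi(m)=\chi(m)\,\id_V$ for every $m\in M$, so any $M$-ambiguity in the intermediate $\kappa(\cdot)$ produces only a scalar factor that pulls out of the $\End V$-valued integrand. This reduces the vector-valued bookkeeping to essentially scalar manipulations (and also shows that the factors $c_{\beta_k}^\pi(\lambda)$ commute, so the order of the product in the statement is immaterial). The main technical obstacle is precisely this bookkeeping: one must verify that the rearrangement of $n_1\bar n_{\beta_2}\cdots \bar n_{\beta_m}$, the conjugation by $\kappa(\bar n_{\beta_1})$, and the Haar measure normalizations all align so that the $\bar n_{\beta_1}$-integral produces exactly $c_{\beta_1}^\pi(\lambda)$ and the residual integral has the same structure as the original. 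This is the technical core of the classical proofs in \cite{Schiffman, Wallach73, War, Sekiguchi}.
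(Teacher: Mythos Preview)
The paper does not supply its own proof of this theorem; it is quoted from the classical literature \cite{Schiffman,Wallach73,War,Sekiguchi}. Your outline is indeed in the spirit of those references (the Gindikin--Karpelevi\v{c}/Schiffmann inductive reduction), so at the level of strategy there is nothing to compare.

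There is, however, a genuine error in your justification. You write that ``since $\pi|_M$ is irreducible, Schur's lemma gives $\pi(m)=\chi(m)\,\id_V$ for every $m\in M$.'' This is false: irreducibility of $\pi|_M$ says that $\End_M V\simeq\mathbb{C}$, i.e.\ operators \emph{commuting} with all $\pi(m)$ are scalar, not that each $\pi(m)$ is scalar. In the paper's main example $V$ is two--dimensional and $M$ acts irreducibly, so $\pi(m)$ is certainly not a scalar in general. Consequently, your proposed mechanism for handling ``$M$-ambiguity in the intermediate $\kappa(\cdot)$'' does not work as stated. The correct use of smallness is different: one checks that each rank-one factor $c_\alpha^\pi(\lambda)$ lies in $\End_M V$ (by the change of variables $\bar n_\alpha\mapsto m\bar n_\alpha m^{-1}$ in the defining integral, using that $M$ normalizes $\bar N_\alpha$ and centralizes $A$), and \emph{then} Schur forces it to be a scalar. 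This is also why the factors commute.

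A second point: the product formula in the cited references is proved for arbitrary $K$-types, with operator-valued $c$-functions and an ordered product; smallness is not ``crucial'' for the factorization itself but only for the scalar reduction. Moreover, the cleanest route in those references is not a bare-hands manipulation of the iterated integral but the cocycle relation $c^\pi(w_1w_2,\lambda)=c^\pi(w_1,w_2\lambda)\,c^\pi(w_2,\lambda)$ for $\ell(w_1w_2)=\ell(w_1)+\ell(w_2)$, applied to the reduced word for $w^*$. Your sketch of ``moving $n_1$ past the remaining factors'' and ``conjugation by $\kappa(\bar n_{\beta_1})$'' hides exactly this step; note in particular that $K_{\beta_1}$ need not normalize $\bar N_{\beta_2}\cdots\bar N_{\beta_m}$, so the change of variables you invoke requires more care than indicated.
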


For the case of the trivial $K$-type $\pi=\text{triv}$, 
$c_{\beta_i}^\text{triv}$ can be written
 explicitly by the classical Gamma function and we have 
 the Gindikin and Karpelevi\v{c} product formula for ${c}^\text{triv}(\lambda)$ 
 (cf. \cite{GK}, see also \cite[Ch IV, \S 6]{Hel2} and \cite[\S 9.1.7]{War}). 
 Note that the constant $c_0$ in 
 \eqref{eqn:normalize} is determined explicitly by the 
 Gindikin and Karpelevi\v{c} formula for $c^{\text{triv}}(\lambda)$. 
 
The ${c}$-function for a one-dimensional $K$-type $\pi$ 
of a group $G$ of Hermitian type is also given 
explicitly by the Gamma function (\cite{Sc}, \cite{S:eigen}). 

In \cite{OS} we give an explicit formula  
of $c^\pi(\lambda)$ for each 
simple $G$ and each small $K$-type $\pi$, with one exception for $G$ of type $G_2$ 
and a certain small $K$-type $\pi$. 
The method we use is to relate the $\pi$-elementary spherical 
function $\phi_\lambda^\pi$ with 
the Heckman-Opdam hypergeometric function, 
instead of computing the integral (\ref{eq:cfgroup}) by using Theorem~\ref{thm:prod}. 
Heckman~\cite[Chapter 5]{Hec} gives in this way an explicit formula of ${c}^\pi(\lambda)$ 
for a one-dimensional $K$-type $\pi$ when the group $G$ is  of Hermitian type. 

In \S~\ref{subsec:g2} we give an explicit formula of $c^\pi(\lambda)$ 
for $G$ of type $G_2$ and each small $K$-type $\pi$ by using Theorem~\ref{thm:prod}. 

\subsection{$\pi$-spherical transform}

Let $dH$ denote the Euclidean measure on $\mathfrak{a}$ with respect to the Killing form. 
Define $\delta_{G/K}=\prod_{\alpha \in \varSigma^+}
|2\sinh \alpha|^{\bsm_\alpha}$. 
We normalize the Haar measure $dg$ on $G$ so that
\begin{equation*}
\int_G \psi(g)dg=\frac1{\#W}\int_{\mathfrak a} \psi(e^H)\, \delta_{G/K}(H)dH
\end{equation*}
for any compactly supported continuous $K$-bi-invariant function $\psi$ on $G$
(cf.~\cite[Ch.~I, Theorem 5.8]{Hel2}). 

Let $C^\infty_c(G,\pi,\pi)$ be the subspace of $C^\infty(G,\pi,\pi)$
consisting of the compactly supported smooth $\pi$-spherical functions. 
The $\pi$-spherical transform of $f\in C_c^\infty(G,\pi,\pi)$ is the $\text{End}_M V\simeq \mathbb{C}$-valued 
function ${f}^\wedge$ on $\mathfrak{a}_\mathbb{C}^*$ defined by
\begin{equation}
{f}^\wedge(\lambda)=\int_G \phi_\lambda^\pi(g^{-1})f(g)dg.
\end{equation}
The $\pi$-spherical transform $f\mapsto f^\wedge(\lambda)$ is a homomorphism from the commutative convolution 
algebra $C_c^\infty(G,\pi,\pi)$ to $\mathbb{C}$ (cf. \cite{Camporesi2}). 
It 
 is a special case of the Fourier transform given by  
Arthur~\cite{A} (see also \cite[\S 3]{BS3}). %
By the identification $C_c^\infty(G,\pi,\pi)\simeq C_c^\infty(\mathfrak{a})^W$, 
the 
$\pi$-spherical transform $f^\wedge$ of $f\in C_c^\infty(\mathfrak{a})^W$ is given by  
\begin{equation}
{f}^\wedge(\lambda)=\frac{1}{\# W}\int_{\mathfrak{a}}f(H)
\varUpsilon^\pi(\phi_{-\lambda}^\pi)(H)\delta_{G/K}(H)\,dH. 
\end{equation}
We normalize the Haar measure $d\lambda$ on $\sqrt{-1}\mathfrak{a}^*$
 so that the Euclidean Fourier transform and its inversion are given by 
\[
\tilde{f}(\lambda)=\int_{\mathfrak{a}}f(H)e^{-\lambda(H)}dH, \qquad
f(H)=\int_{\sqrt{-1}\mathfrak{a}^*} \tilde{f}(\lambda)e^{\lambda(H)}d\lambda. 
\]

Let $\eta_1$ be a point in $-\overline{\mathfrak{a}_+^*}$ such 
that $c^\pi(-\lambda)^{-1}$ is a regular 
function of $\lambda$ for $\text{Re}\,\lambda\in \eta_1-\overline{\mathfrak{a}_+^*}$. 
The existence of such $\eta_1$ follows from an 
explicit formula of the $c$-function for each small $K$-type, 
which is determined by \cite{OS} and \S\ref{sec:g2} for $G_2$.
It also follows from a general result on the Harish-Chandra $c$-function 
due to Cohn \cite{Cohn}. 

Let $F=f^\wedge$. 
Following \cite[Chapter II, \S 1]{A}, 
define the function $F^\vee(H)$ on $\mathfrak{a}_+$ by
\begin{equation}\label{eqn:invft}
F^\vee(H)=
\int_{\eta_1+\sqrt{-1}\mathfrak{a}^*} F(\lambda)\Phi
_\lambda(H)c^\pi(-\lambda)^{-1}d\lambda.
\end{equation}
The integral \eqref{eqn:invft} converges and is independent of $\eta_1$ 
(cf. \cite[Chapter II, \S 1]{A}). 
$F^\vee$ defined above coincides with that given by Arthur, 
because  $\phi_\lambda^\pi$ is $W$-invariant in $\lambda$ and the 
Harish-Chandra $\mu$-function associate with a minimal parabolic subgroup 
in our case is given by 
$c^\pi(\lambda)^{-1}c^\pi(-\lambda)^{-1}$ (cf. \cite[\S~10.5]{Wal}). 
The following theorem is a special case of \cite[Chapter III, Theorem 3.2]{A}.
It is also a special case of \cite[Theorem 1.1]{BS2}, since $G$ is a 
semisimple 
symmetric space for $G\times G$ (cf. \cite{BS3}).


\begin{thm}\label{thm:inv}
For $f\in C_c^\infty(\mathfrak{a})^W$ we have
\[
f(H)=({f}^\wedge)^\vee(H)\quad (H\in\mathfrak{a}_+).
\]
\end{thm}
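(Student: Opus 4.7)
My plan is to reduce Theorem~\ref{thm:inv} to Arthur's general Plancherel-type inversion formula \cite[Ch.~III, Thm.~3.2]{A}, which is the route the authors explicitly prefer to the direct Rosenberg-style approach of \cite{Oshima81a,R,SPlancherel}. The work consists mainly in setting up a precise dictionary between our $\pi$-spherical objects (for a small $K$-type) and the minimal-parabolic Eisenstein integral framework attached to $P_0=MAN$; once this is done, the theorem follows essentially by citation.

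\textbf{Main steps.} First, I would identify $\phi^\pi_\lambda$ with the scalar multiple of the Eisenstein integral $E(P_0:\psi_\pi:\lambda)$ in Arthur's normalization. Smallness of $\pi$ forces the space of Eisenstein integrals of type $\pi$ to be one-dimensional, and the integral representation \eqref{eq:eisenstein}, normalized by $\phi^\pi_\lambda(e)=\id_V$, identifies the canonical generator. Second, I would check that under the isomorphism $C_c^\infty(G,\pi,\pi)\simeq C_c^\infty(\mathfrak a)^W$ the $\pi$-spherical transform $f\mapsto f^\wedge$ matches Arthur's Fourier transform restricted to the $\pi$-isotype. Third, I would verify that the Harish-Chandra $\mu$-function for $P_0$ equals $c^\pi(\lambda)^{-1}c^\pi(-\lambda)^{-1}$ on the $\pi$-isotype, as indicated in the remark preceding the statement and provable via \cite[\S 10.5]{Wal}. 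Finally, the Weyl symmetry $\phi^\pi_{w\lambda}=\phi^\pi_\lambda$ allows Arthur's wave packet over $W$-orbits of $\lambda$ to be folded into the single contour integral \eqref{eqn:invft}; with this fold, Arthur's inversion formula becomes precisely $F^\vee$.

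\textbf{Main obstacle.} The chief difficulty is bookkeeping rather than analysis: Arthur's formalism is developed for $\End V$-valued Eisenstein integrals and matrix-valued $\mu$-functions over arbitrary parabolics, so one must carefully match Haar measure normalizations on $G$, on $\overline N$, and on $\sqrt{-1}\mathfrak a^*$, and then reduce to scalar objects via $\End_M V\simeq\bbC$. The purely analytic inputs---convergence of \eqref{eqn:invft} and its independence of $\eta_1$---are standard consequences of (i) Paley--Wiener type rapid decay of $f^\wedge(\lambda)$ in vertical strips, which follows from the compact support of $f$ and the holomorphic/polynomially bounded dependence of $\phi^\pi_\lambda$ on $\lambda$, (ii) the uniform Harish-Chandra estimates for the series $\Phi_\lambda(H)$ on $\mathfrak a_+$, and (iii) the prescribed holomorphy of $c^\pi(-\lambda)^{-1}$ on the tube $\eta_1-\overline{\mathfrak a_+^*}$, guaranteed either by the explicit formulas of \cite{OS} or by Cohn's general result \cite{Cohn}. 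Once the dictionary is in place, \cite[Ch.~III, Thm.~3.2]{A} (equivalently \cite[Thm.~1.1]{BS2} applied to the symmetric space $(G\times G)/G$, cf.~\cite{BS3}) yields Theorem~\ref{thm:inv} on $H\in\mathfrak a_+$.
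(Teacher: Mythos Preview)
Your proposal is correct and follows exactly the route the paper takes: the authors do not give an independent proof of Theorem~\ref{thm:inv} but simply record it as a special case of \cite[Ch.~III, Theorem~3.2]{A} (equivalently \cite[Theorem~1.1]{BS2} via \cite{BS3}), after noting that the $\mu$-function for the minimal parabolic equals $c^\pi(\lambda)^{-1}c^\pi(-\lambda)^{-1}$ and that the $W$-invariance $\phi^\pi_{w\lambda}=\phi^\pi_\lambda$ makes their $F^\vee$ coincide with Arthur's wave packet. Your outline of the dictionary is a faithful expansion of what the paper leaves implicit.
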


If $c^\pi(-\lambda)^{-1}$ is a regular function of $\lambda$ for 
$\text{Re}\,\lambda\in -\overline{\mathfrak{a}_+^*}$, 
then we can take $\eta_1=0$ and by \eqref{eqn:hcs2} and $W$-invariance of 
$c^\pi(\lambda)^{-1}c^\pi(-\lambda)^{-1}$ in 
$\lambda\in\sqrt{-1}\mathfrak{a}^*$, we have 
\begin{equation}\label{eqn:invcont}
f(H)
=
\frac{1}{\# W}\int_{\sqrt{-1}\mathfrak{a}^*}
f^\wedge (\lambda)\varUpsilon^\pi(\phi^\pi_\lambda)(H)
|c^\pi(\lambda)|^{-2}d\lambda\quad (H\in\mathfrak{a}^*). 
\end{equation}

In \cite[Corollary 7.6]{OS} 
we prove the formula \eqref{eqn:invcont} by using the inversion formula 
of the hypergeometric Fourier transform due to Opdam~\cite{Op:Cherednik},
under the assumption that $\pi$ is a small $K$-type of
a real simple $G$ which is not in the following list:

\smallskip
\noindent
(1) $\mathfrak{g}=\mathfrak{sp}(p,1),\,\pi=\pi_n\circ \text{pr}_2$ 
($\pi_n$ : $n$-dimensional irred. rep. of $\text{Sp}(1))$,  
\\
(2) $\mathfrak{g}=\mathfrak{so}(2r,1)$,\\ 
\phantom{(3)} $\pi=\pi_s^{\pm}$ : 
irred. rep. of $\text{Spin}(2r)$ with h.w. $(s/2,\cdots s/2,\pm s/2)\,\,
(s\in \mathbb{N})$,\\
(3) $\mathfrak{g}=\mathfrak{so}(p,q)\quad (p>q\geq 3,\,\,\text{$p$ : even, 
$q$ : odd)}$,\\
\phantom{(3)} $\pi=\sigma\circ\text{pr}_1\,\,(\sigma$ : one of half spin representation of $\text{Spin}(p)$),\\
(4) $\mathfrak{g}$\,:\,Hermitian type, $\pi$ : one dimensional $K$-type, 
\\
(5) $\mathfrak{g}=\mathfrak{g}_2$, $\pi=\pi_2$ (see \S \ref{sec:g2} for the definition).

\medskip
Though the case (3) is not covered by \cite[Corollary 7.6]{OS}, 
the formula \eqref{eqn:invcont} holds in this case, since 
$c^\pi(-\lambda)^{-1}$ is a regular function of $\lambda$ for 
$\text{Re}\,\lambda\in -\overline{\mathfrak{a}_+^*}$ as we mention in 
the final part of \cite{OS}. 

If the parameter of the small $K$-type is 
``large enough'' in the cases (1), (2), and (4), 
then $c^\pi(-\lambda)^{-1}$ has singularities in 
$\text{Re}\,\lambda\in -\overline{\mathfrak{a}_+^*}$ and 
we must take account of residues during the contour 
shift $\eta_1+\sqrt{-1}\mathfrak{a}^*\to \sqrt{-1}\mathfrak{a}^*$. 
The most continuous part of the spectrum is given by the right hand side 
of \eqref{eqn:invcont}. In addition, there are spectra with low dimensional 
supports. The residue calculus in the case (4) is done by \cite{SPlancherel}. 
For the cases (1) and (2), $\dim\mathfrak{a}=1$ and the residue calculus is 
easy to proceed. Also these cases are covered by the inversion formula 
of the Jacobi transform (cf. \cite[Appendix 1]{FJ}, \cite{Koornwinder}). 
See also \cite{vDP} and \cite{Shyperbolic} for the case (1). 

We will discuss the case (5) in the next section. 

\section{The case of $G_2$}
\label{sec:g2}

\subsection{Notation and preliminary results}\label{subsec:g2prem}
Let $\mathfrak{g}$ be the simple split real Lie algebra of type $G_2$ and $G$ the connected simply connected  
Lie group with the Lie algebra $\mathfrak{g}$. 
$G$ is the double cover of the adjoint group of $\mathfrak{g}$. 
Let $K$ be a maximal compact subgroup of $G$ and 
$\mathfrak{k}$ the Lie algebra of $K$. Then 
$K\simeq \SU(2)\times \SU(2)$ and $\mathfrak{k}\simeq \mathfrak{su}(2)\oplus \mathfrak{su}(2)$. 

Let $\mathfrak{t}$ be a maximal abelian subalgebra of $\mathfrak{k}$. 
Then $\mathfrak{t}$ is a Cartan subalgebra 
of $\mathfrak{g}$. Let $\Delta$ and $\Delta_K$ denote the root system for $(\mathfrak{g}_\mathbb{C},\mathfrak{t}_\mathbb{C})$ 
and $(\mathfrak{k}_\mathbb{C},\mathfrak{t}_\mathbb{C})$, respectively. 
Then $\Delta$ is a root system of type $G_2$. 
We choose a positive system $\Delta^+\subset \Delta$
so that its base contains a short compact root $\beta_1$.
The other simple root, say $\beta_2$, is a long noncompact root.
If we put $\Delta_K^+=\Delta_K\cap \Delta^+$ then
\begin{align*}
& \Delta^+=\{\beta_1,\beta_2,\beta_1+\beta_2,2\beta_1+\beta_2,3\beta_1+\beta_2,3\beta_1+2\beta_2\},\\
& \Delta_K^+=\{\beta_1,3\beta_1+2\beta_2\}.
\end{align*}
We fix an inner product on $\sqrt{-1}\mathfrak{t}^*$ such that 
$(\beta_1,\beta_1)=2$. Then $(\beta_2,\beta_2)=6$ and $(\beta_1,\beta_2)=-3$.
We let $K=K_1\times K_2$ with $K_i\simeq \SU(2)\,\,(i=1,2)$
assuming that
$\beta_1$ (resp. $3\beta_1+2\beta_2$) is a 
root for $((\mathfrak{k}_1)_\mathbb{C}, 
(\mathfrak{t}\cap\mathfrak{k}_1)_\mathbb{C})$ (resp. 
$((\mathfrak{k}_2)_\mathbb{C}, 
(\mathfrak{t}\cap\mathfrak{k}_2)_\mathbb{C})$).
Let  
$\pr_i$ denote the projection of $K$ to $K_i\,\,(i=1,2)$. 

The classification of the small $K$-types for $G$ is given as follows:
\begin{thm}[{\cite[Theorem 1]{SWL}}]\label{thm:G2}
The non-trivial small $K$-types are $\pi_1:=\sigma\circ\pr_1$ and  $\pi_2:=\sigma\circ\pr_2$. 
Here $\sigma$ is the two-dimensional irreducible representation of $\SU(2)$. 
\end{thm}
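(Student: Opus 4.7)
My plan is to classify those irreducible $K$-representations whose restriction to $M=Z_K(\mathfrak{a})$ remains irreducible. Every irreducible representation of $K=\SU(2)\times\SU(2)$ is of the form $\sigma_m\boxtimes\sigma_n$ with $m,n\in\bbN$, where $\sigma_k$ denotes the unique $(k+1)$-dimensional irreducible representation of $\SU(2)$ (so that $\sigma=\sigma_1$). The problem therefore splits into two substeps: (i) identify $M$ explicitly as a finite subgroup of $K$, and (ii) test the irreducibility of $(\sigma_m\boxtimes\sigma_n)|_M$ for each pair $(m,n)$.

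For step (i), since $\mathfrak{g}$ is split, the centralizer $M$ is a finite $2$-group, generated by explicit order-$2$ or $4$ elements lying in the compact $\SU(2)$ subgroups attached to the restricted root $\mathfrak{sl}_2$-triples. In an explicit matrix realization of $\mathfrak{g}$ inside $\mathfrak{so}(7)$, I will compute the projections $\pr_i(M)\subset K_i\simeq\SU(2)$. Because $G$ is the simply connected double cover of the adjoint group and the two simple restricted roots have distinct lengths, I expect each $\pr_i(M)$ to be isomorphic to the quaternion group $Q_8\subset\SU(2)$, with $M$ itself of order $8$, embedded diagonally as the graph of an isomorphism $\pr_1(M)\simarrow\pr_2(M)$.

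For step (ii), the irreducible representations of $Q_8$ are classical: four $1$-dimensional characters, and a unique $2$-dimensional irreducible representation realized by the inclusion $Q_8\hookrightarrow\SU(2)$. Consequently $\sigma_m|_{\pr_i(M)}$ is irreducible iff $m\in\{0,1\}$. A direct character computation then shows that $(\sigma_m\boxtimes\sigma_n)|_M$ can be irreducible only if $(m,n)\in\{(0,0),(1,0),(0,1)\}$: the remaining cases either contain a reducible tensor factor or (in the case $(1,1)$) involve the tensor square of the $2$-dimensional irreducible representation of $Q_8$, which decomposes as the sum of the four linear characters. The three surviving cases yield the trivial representation, $\pi_1=\sigma\circ\pr_1$, and $\pi_2=\sigma\circ\pr_2$. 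The main obstacle is the explicit determination of $M$ inside the simply connected $G$, and in particular verifying that both projections $\pr_i(M)$ are full copies of $Q_8$ and that $M$ is diagonally embedded, since these features together force the small $K$-type list to be exactly as claimed.
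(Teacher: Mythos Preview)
The paper does not give its own proof of this theorem; it simply quotes it as \cite[Theorem~1]{SWL}. So there is no ``paper's proof'' to compare against, and your proposal stands on its own merits rather than as a reconstruction.

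Your plan is the standard one and is essentially correct in outline: parametrize $\widehat K$ by $\sigma_m\boxtimes\sigma_n$, compute $M=Z_K(\mathfrak a)$, and test irreducibility of the restriction. The representation-theoretic half (step~(ii)) is fine: once one knows that $M$ has order~$8$, that both projections $\pr_i(M)$ are copies of $Q_8$, and that $M$ sits as the graph of an isomorphism, your case analysis for $(m,n)$ is accurate, including the $(1,1)$ case via $\sigma\otimes\sigma\big|_{Q_8}\simeq\bigoplus\chi$.

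The genuine work, as you yourself flag, is step~(i). Two cautions. First, you should not merely ``expect'' the structure of $M$: for split groups the passage from the adjoint group (where $M$ is an elementary abelian $2$-group) to the simply connected cover can produce either $Q_8$ or $D_4$ or an abelian group, and which one occurs here must be checked. The paper's later citations of \cite[Lemma~4.2, Lemma~4.3]{SWL} in \S\ref{subsec:g2}, giving half-integral weights $\pm\tfrac12$ and $\pm\tfrac32$ of $t_\alpha$ on $\pi_1,\pi_2$, are in fact exactly the computations that pin this down: they show that each $m_\alpha=\exp(\pi t_\alpha)$ has order~$4$ in \emph{both} $\SU(2)$ factors, which is what forces $\pr_i(M)\simeq Q_8$ rather than a smaller image. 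Second, the claim that $M$ is embedded as a \emph{graph} (rather than, say, with a nontrivial kernel in one projection) also needs an argument; it follows once one checks that $Z(K)\cap M=\{\pm(1,1)\}$, i.e.\ that $(-1,1)$ and $(1,-1)$ do not centralize $\mathfrak a$. If you supply these verifications your proof will be complete.
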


A discrete series representation of $G$ is an irreducible representation of $G$ realized as a closed 
$G$-invariant subspace of the 
left regular representation on $L^2(G)$. 

\begin{lem}
Let $G$ be as above.
Then no small $K$-type appears in any discrete series representation of $G$.
\end{lem}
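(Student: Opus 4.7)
The plan is to apply the Blattner--Hecht--Schmid multiplicity formula for the $K$-types of a discrete series. For a regular, analytically integral Harish--Chandra parameter $\lambda\in\sqrt{-1}\mathfrak t^*$ dominant for a positive system $\Delta^+_\lambda\supset\Delta^+_K$, this formula gives the multiplicity of the $K$-type of highest weight $\mu$ in the discrete series $\pi(\lambda)$ as
\[
m(\mu)=\sum_{w\in W_K}\det(w)\,\mathcal P_n^{\lambda}\bigl(w(\mu+\rho_c)-(\lambda+\rho_n^\lambda)\bigr),
\]
where $\mathcal P_n^\lambda$ is the Kostant partition function for the noncompact positive roots in $\Delta^+_\lambda$. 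It therefore suffices, for $\mu=\mu_i$ the highest weight of $\pi_i$ ($i=1,2$), to verify that $w(\mu_i+\rho_c)-(\lambda+\rho_n^\lambda)$ lies outside the closed cone $\mathcal C^\lambda$ spanned by those noncompact positive roots, for every $w\in W_K$ and every discrete series parameter $\lambda$.

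Since $|W|/|W_K|=3$, exactly three positive systems $\Delta^+_\lambda$ contain $\Delta^+_K$, partitioning the $K$-dominant region into three subchambers. I will work throughout in the orthogonal basis $(\xi,\eta):=(\beta_1,\,3\beta_1+2\beta_2)$ of $\mathfrak t^*$, on which $W_K$ acts by independent sign changes and in which $\rho_c=(\tfrac12,\tfrac12)$, $\mu_1+\rho_c=(1,\tfrac12)$, and $\mu_2+\rho_c=(\tfrac12,1)$. In particular the $W_K$-orbit of each $\mu_i+\rho_c$ is contained in the box $\{|v_\xi|,\,|v_\eta|\le 1\}$. A direct computation gives $\rho_n^\lambda$ in each subchamber together with a half-space containing the corresponding cone:
\[
\mathcal C^{(\mathrm I)}\subset\{v_\eta\ge 0\},\qquad \mathcal C^{(\mathrm{II})}\subset\{v_\xi+3v_\eta\ge 0\},\qquad \mathcal C^{(\mathrm{III})}\subset\{v_\xi\ge 0\}.
\]

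The heart of the argument is a lower bound on the appropriate coordinate of $\lambda+\rho_n^\lambda$ in each chamber, which combined with the box bound on $w(\mu_i+\rho_c)$ forces $w(\mu_i+\rho_c)-(\lambda+\rho_n^\lambda)$ to violate the half-space inequality characteristic of that chamber. These lower bounds are forced by the chamber-defining inequalities together with analytic integrality, and amount to short arithmetic verifications of the shape $(\lambda+\rho_n^{(\mathrm I)})_\eta\ge 2$, $(\lambda+\rho_n^{(\mathrm{II})})_\xi+3(\lambda+\rho_n^{(\mathrm{II})})_\eta>7/2$, and $(\lambda+\rho_n^{(\mathrm{III})})_\xi>1$; in each case the strict inequality already fails the matching box bound for both $\mu_1$ and $\mu_2$, hence $\mathcal P_n^\lambda\bigl(w(\mu_i+\rho_c)-(\lambda+\rho_n^\lambda)\bigr)=0$ for every $w\in W_K$.

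The main point of care is the integrality condition. Because $K=\mathrm{SU}(2)\times\mathrm{SU}(2)$ is simply connected while $Z(G)\neq 1$, the analytic character lattice $X^*(T)$ strictly contains the root lattice $Q(\Delta)$. As a result, the enumeration of admissible $(a,b)$ with $\lambda=a\beta_1+b\beta_2$ in each chamber includes both an integer and a ``half-integer'' family of parameters, and both have to be treated when establishing the lower bounds above. Once these enumerations are in place, however, all six subcases (three chambers times two small $K$-types) reduce to the half-space inequalities just displayed, yielding $m(\mu_1)=m(\mu_2)=0$ for every discrete series $\pi(\lambda)$ and completing the proof.
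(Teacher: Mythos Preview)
Your approach is correct and genuinely different from the paper's. The paper disposes of the trivial $K$-type and $\pi_1$ by citing the already-established Plancherel formulas for those cases (no discrete spectrum appears), and handles $\pi_2$ via the Atiyah--Schmid necessary condition \cite[Theorem~8.5]{AS}: if $\pi_2$ occurs in the discrete series with parameter $\lambda$ lying in chamber $i$, then its highest weight must equal $\lambda+\delta_i-2\delta_K$ plus a nonnegative integer combination of roots in $\Delta_i^+$, and in each of the three chambers this single inequality already contradicts the chamber conditions. You instead invoke the full Blattner--Hecht--Schmid multiplicity formula and kill every $W_K$-term simultaneously by a half-space estimate, treating $\pi_1$ and $\pi_2$ uniformly. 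The paper's route is a bit shorter for $\pi_2$ (one cone condition per chamber, essentially the $w=e$ term of Blattner) but is not self-contained for the other small $K$-types; your route is uniform and makes the geometry transparent---the $W_K$-orbit of $\mu_i+\rho_c$ is trapped in the box $\{|v_\xi|,|v_\eta|\le 1\}$ while integrality pushes $\lambda+\rho_n^\lambda$ outside. One small omission: the lemma also covers the trivial $K$-type, which you do not mention; your argument applies to it verbatim with the even smaller box $\{|v_\xi|,|v_\eta|\le\tfrac12\}$, or one may simply cite the classical spherical Plancherel theorem as the paper does.
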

\begin{proof}

If $\pi$ is the trivial $K$-type or $\pi=\pi_1$, then 
it follows from the Plancherel formula for the 
$\pi$-spherical functions (cf. \cite{Hel2}, \cite{OS}) 
that there are no discrete series representations having 
the $K$-type $\pi$. 
 
Next let us discuss the case of $\pi_2$.
The positive open chamber $(\sqrt{-1}\mathfrak{t}^*)^+$ for $\Delta_K^+$
contains the following three open chambers for $\Delta$:
\begin{align*}
(\sqrt{-1}\mathfrak{t}^*)^+_1&:=\{\lambda\in (\sqrt{-1}\mathfrak{t}^*)^+\,|\, (\lambda,\beta_2)>0\},\\
(\sqrt{-1}\mathfrak{t}^*)^+_2&:=\{\lambda\in (\sqrt{-1}\mathfrak{t}^*)^+\,|\, (\lambda,\beta_2)<0\text{ and }(\lambda,\beta_1+\beta_2)>0\},\\
(\sqrt{-1}\mathfrak{t}^*)^+_3&:=\{\lambda\in (\sqrt{-1}\mathfrak{t}^*)^+\,|\, (\lambda,\beta_1+\beta_2)<0\}.
\end{align*}
Let $\Delta_i^+$ be the corresponding positive systems ($i=1,2,3$).
Note that $\Delta_1^+=\Delta^+$. 
If we put $\delta_i=\frac12\sum_{\beta\in\Delta^+_i}\beta$ then
\[
\delta_1=5\beta_1+3\beta_2,\quad
\delta_2=5\beta_1+2\beta_2,\quad
\delta_3=4\beta_1+\beta_2.
\]
On the other hand,
$\delta_K:=\frac12\sum_{\beta\in\Delta_K^+}\beta=2\beta_1+\beta_2.$
Now suppose $\pi_2$ appears in a discrete series representation with Harish-Chandra parameter 
$\lambda\in \sqrt{-1}\mathfrak{t}^*$.
We may assume $\lambda\in (\sqrt{-1}\mathfrak{t}^*)^+_i$
for $i=1,2$, or $3$.
Since the highest weight of $\pi_2$ is $\frac32\beta_1+\beta_2$,
it follows from \cite[Theorem~8.5]{AS} that 
\[
\frac32\beta_1+\beta_2=\lambda+\delta_i-2\delta_K+\sum_{\beta\in\Delta^+_i}
n_\beta \beta\quad \text{for some }n_\beta\in\mathbb{N}. 
\]
If $i=1$ then this reduces to
\[
\lambda=\left(\frac12-c_1\right)\beta_1-c_2\beta_2\quad\text{for some }c_1,\,c_2\in\mathbb{N}.
\]
Since $(\lambda,\beta_j)>0\,\,(j=1,2)$, we have $1-2c_1+3c_2>0$ and $-\frac32+3c_1-6c_2>0$, 
which are impossible.
If $i=2$ or $3$ then we can also deduce a contradiction in a similar way.
\end{proof}

\subsection{Harish-Chandra ${c}$-function for $G_2$
}
\label{subsec:g2}

The restricted root system $\varSigma=\varSigma(\mathfrak{g},\mathfrak{a})$ is 
a root system of type $G_2$. 
For all $\alpha\in\varSigma$ we have 
$\mathfrak{g}_{(\alpha)}\simeq \mathfrak{sl}(2,\mathbb{R})$, since 
$\boldsymbol{m}_\alpha=\dim\mathfrak{g}_\alpha=1$ and $2\alpha\not\in\varSigma$. 

We recall the ${c}$-function for $\mathfrak{g}=\mathfrak{sl}(2,\mathbb{R})$. 
Put 
\[
h=\begin{pmatrix} 1 & 0 \\ 0 & -1\end{pmatrix},\quad 
e=\begin{pmatrix} 0 & 1 \\ 0 & 0\end{pmatrix},\quad 
f=\begin{pmatrix} 0 & 0 \\ 1 & 0\end{pmatrix}. 
\]
Then $\{h,e,f\}$ is a basis of $\mathfrak{sl}(2,\mathbb{R})$ and also 
forms an $\mathfrak{sl}_2$-triple. 
We put $\mathfrak{a}=\mathbb{R}h$ and $\mu=\lambda(h)$ for $\lambda\in\mathfrak{a}_\mathbb{C}^*$. 
By \cite[Remark 7.3]{Sc}, 
the ${c}$-function  for $\mathfrak{sl}(2,\mathbb{R})$ with a one-dimensional 
$\mathfrak{k}$-type $\pi$ of the weight $\nu\in\mathbb{Q}$ for $\sqrt{-1}(e-f)$ is given by 
\begin{equation}\label{eq:cfsl2}
{c}^\pi(\lambda)=\frac{2^{1-\mu}\varGamma(\mu)}{\varGamma(\frac12(\mu+1+\nu))
\varGamma(\frac12(\mu+1-\nu))}.
\end{equation}

Now we come back to the case of $G_2$. For $\alpha\in\varSigma^+$ 
choose $e_\alpha\in \mathfrak{g}_{\alpha}$ so 
that $\{\alpha^\vee,\,e_\alpha,\,-\theta e_\alpha\}$ is an $\mathfrak{sl}_2$-triple. 
Put $t_\alpha:=e_\alpha+\theta e_\alpha\in \sqrt{-1}\mathfrak{k}_\alpha$.  
If $\alpha\in \varSigma^+$ is a long root, then the 
possible weights of $t_\alpha$ for 
$\pi_i\,(i=1,2)$ are $\pm \frac12$ by \cite[Lemma 4.2]{SWL}. 
If $\alpha\in \varSigma^+$ is a short root, then the possible weights of $t_\alpha$  
for $\pi_1$ (resp. $\pi_2$) are $\pm\frac12$ 
(resp. $\pm\frac32$) by \cite[Lemma 4.3]{SWL}. 
Since \eqref{eq:cfsl2} remains unchanged if we replace $\nu$ by $-\nu$, 
$c_\alpha^{\pi_i}(\lambda)$ is a scalar operator for each $\alpha\in \varSigma$ and $i=1,\,2$. 

Let $\varSigma_\text{long}^+$ and $\varSigma_\text{short}^+$ denote 
the sets of the long and short positive roots, respectively. 
Define $\lambda_\alpha=\langle\lambda,\alpha^\vee\rangle$ 
for $\lambda\in\mathfrak{a}_\mathbb{C}^*$ and 
$\alpha\in\varSigma^+$. 
It follows from Theorem~\ref{thm:prod}, 
\eqref{eq:cfsl2}, and the proof of \cite[Ch. IV, Theorem~6.13]{Hel2} 
that
\begin{align*}
{c}^{\text{triv}}(\lambda) & 
=
c_0\!
\prod_{\alpha\in\varSigma^+}\frac{2^{1-\lambda_\alpha}
\varGamma(\lambda_\alpha)}
{\varGamma(\frac12\lambda_\alpha+\frac12)^2}
=
c_0\!
\prod_{\alpha\in\varSigma^+}\frac{\pi^{-\frac12}
\varGamma(\frac12\lambda_\alpha)}
{\varGamma(\frac12\lambda_\alpha+\frac12)}, 
\\
{c}^{\pi_1}(\lambda) & =c_0\!\prod_{\alpha\in\varSigma^+} 
\frac{2^{1-\lambda_\alpha}\varGamma(\lambda_\alpha)}{\varGamma(\frac12(\lambda_\alpha+\frac32))
\varGamma(\frac12(\lambda_\alpha+\frac12))} 
  =c_0\!\prod_{\alpha\in\varSigma^+} 
\frac{2^{\frac12}\pi^{-\frac12}\varGamma(\lambda_\alpha)}
{\varGamma(\lambda_\alpha+\frac12)}, 
\\
{c}^{\pi_2}(\lambda) & 
=c_0\!\!
\prod_{\alpha\in\varSigma_\text{long}^+} \!\!
\frac{2^{1-\lambda_\alpha}\varGamma(\lambda_\alpha)}{\varGamma(\frac12(\lambda_\alpha+\frac32))
\varGamma(\frac12(\lambda_\alpha+\frac12))} 
\prod_{\beta\in\varSigma_\text{short}^+} \!\!
\frac{2^{1-\lambda_\beta}\varGamma(\lambda_\beta)}{\varGamma(\frac12(\lambda_\beta+\frac52))
\varGamma(\frac12(\lambda_\beta-\frac12))} \\
&=c_0\!\!
\prod_{\alpha\in\varSigma_\text{long}^+} \!\!
\frac{2^{\frac12}\pi^{-\frac12}\varGamma(\lambda_\alpha)}
{\varGamma(\lambda_\alpha+\frac12)}
\prod_{\beta\in\varSigma_\text{short}^+} \!\!
 \frac{2^{\frac12}\pi^{-\frac12}\left(\lambda_\beta-\frac12\right)\varGamma(\lambda_\beta)}
{\varGamma(\lambda_\beta+\frac32)} .
\end{align*}

The value of the constant $c_0$ is determined by $c^{\text{triv}}(\rho)=1$. 
We have $c_0=2\pi^2$ by direct computation. 
Thus we have the following 
theorem. 

\begin{thm}\label{thm:cfg2}
\begin{align*}
{c}^{\text{\rm triv}}(\lambda) & 
=
\frac{2}{\pi}
\prod_{\alpha\in\varSigma^+}\frac{
\varGamma(\frac12\lambda_\alpha)}
{\varGamma(\frac12\lambda_\alpha+\frac12)}, 
\\
{c}^{\pi_1}(\lambda) & 
  =\frac{16}{\pi}\prod_{\alpha\in\varSigma^+} 
\frac{\varGamma(\lambda_\alpha)}
{\varGamma(\lambda_\alpha+\frac12)}, 
\\
{c}^{\pi_2}(\lambda) 
&=\frac{16}{\pi}
\prod_{\alpha\in\varSigma_\text{\rm long}^+} 
\frac{\varGamma(\lambda_\alpha)}
{\varGamma(\lambda_\alpha+\frac12)}
\prod_{\beta\in\varSigma_\text{\rm short}^+} 
 \frac{\left(\lambda_\beta-\frac12\right)\varGamma(\lambda_\beta)}
{\varGamma(\lambda_\beta+\frac32)} .
\end{align*}
\end{thm}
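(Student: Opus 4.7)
My plan is to apply the Gindikin--Karpelevi\v{c}-type product formula of Theorem~\ref{thm:prod}, evaluate each rank-one factor $c_\alpha^\pi(\lambda)$ from the $\mathfrak{sl}(2,\mathbb{R})$ formula \eqref{eq:cfsl2}, and pin down the normalization constant $c_0$ of \eqref{eqn:normalize} by imposing $c^{\text{triv}}(\rho)=1$. Since the restricted root system of $G_2$ is reduced, $\varSigma_0^+=\varSigma^+$ has six elements and $\mathfrak{g}_{(\alpha)}\simeq\mathfrak{sl}(2,\mathbb{R})$ for every $\alpha$, so \eqref{eq:cfsl2} applies unconditionally.

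First I would read off the weight $\nu=\nu_\alpha^\pi$ of $t_\alpha$ on the one-dimensional $\mathfrak{k}_\alpha$-isotypic components of $\pi$: from Lemmas~4.2 and 4.3 of \cite{SWL} this gives $\nu=0$ for the trivial type, $\nu=\pm\tfrac12$ on every root for $\pi_1$, and $\nu=\pm\tfrac12$ on long roots, $\nu=\pm\tfrac32$ on short roots for $\pi_2$. Invariance of the right-hand side of \eqref{eq:cfsl2} under $\nu\mapsto-\nu$ makes the sign ambiguity irrelevant and confirms that each $c_\alpha^\pi(\lambda)$ is scalar. Substituting these values into \eqref{eq:cfsl2} and then applying Legendre's duplication formula
\[
\varGamma(z)\,\varGamma(z+\tfrac12)=2^{1-2z}\sqrt{\pi}\,\varGamma(2z)
\]
to each factor produces the three intermediate formulas displayed just above the theorem, still with $c_0$ undetermined. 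The only nonmechanical manoeuvre is the short-root factor for $\pi_2$: \eqref{eq:cfsl2} gives $\varGamma(\tfrac12\lambda_\beta+\tfrac54)\,\varGamma(\tfrac12\lambda_\beta-\tfrac14)$ in the denominator, so I would first shift via $\varGamma(x+1)=x\,\varGamma(x)$ to $[(\tfrac12\lambda_\beta+\tfrac14)/(\tfrac12\lambda_\beta-\tfrac14)]\,\varGamma(\tfrac12\lambda_\beta+\tfrac14)\,\varGamma(\tfrac12\lambda_\beta+\tfrac34)$ so that duplication applies; after absorbing $\lambda_\beta+\tfrac12$ into $\varGamma(\lambda_\beta+\tfrac32)$, what remains in the numerator is the extra factor $\lambda_\beta-\tfrac12$.

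Next I would fix $c_0$ from $c^{\text{triv}}(\rho)=1$. For $G_2$ a direct computation with $\rho=5\alpha_1+3\alpha_2$ shows that the six values $\rho_\alpha=\langle\rho,\alpha^\vee\rangle$ are $1,1,2,3,4,5$, and the telescoping
\[
\prod_{\alpha\in\varSigma^+}\frac{\varGamma(\tfrac12\rho_\alpha)}{\varGamma(\tfrac12\rho_\alpha+\tfrac12)}
=\frac{\varGamma(\tfrac12)^2\,\varGamma(1)}{\varGamma(1)^2\,\varGamma(3)}=\frac{\pi}{2}
\]
combined with the overall prefactor $(\pi^{-1/2})^6=\pi^{-3}$ from the six duplication simplifications gives $c_0\cdot\pi^{-3}\cdot(\pi/2)=1$, hence $c_0=2\pi^2$. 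Substituting this back into the three intermediate formulas produces the scalar prefactors $2/\pi$, $16/\pi$, $16/\pi$ asserted by the theorem.

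I expect the main obstacle to be the short-root bookkeeping for $\pi_2$: both correctly identifying the weight $\nu=\pm\tfrac32$ from the branching data of \cite{SWL} and tracking the auxiliary rational shift that is the source of the extra zero at $\lambda_\beta=\tfrac12$. Everything else is a direct invocation of Theorem~\ref{thm:prod} or a routine Gamma-function telescoping.
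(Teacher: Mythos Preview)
Your proposal is correct and follows essentially the same route as the paper: apply Theorem~\ref{thm:prod} with the $\mathfrak{sl}(2,\bbR)$ formula \eqref{eq:cfsl2}, use the weights from \cite{SWL}, simplify via the duplication formula, and fix $c_0=2\pi^2$ from $c^{\mathrm{triv}}(\rho)=1$. You supply more detail than the paper does---in particular the Gamma shift that isolates the factor $\lambda_\beta-\tfrac12$ for short roots in $\pi_2$ and the explicit evaluation of $\prod_\alpha \varGamma(\tfrac12\rho_\alpha)/\varGamma(\tfrac12\rho_\alpha+\tfrac12)=\pi/2$---but the argument is the same.
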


The formula for $c^{\text{triv}}(\lambda)$ in Theorem~\ref{thm:cfg2} is a special case of the 
Gindikin-Karpelevi\v{c} formula (cf. \cite{GK}, \cite[Ch.~IV, Theorem~6.13]{Hel2}). 
The formula for $c^{\pi_1}(\lambda)$ 
is given in \cite{OS} by use of a different method. 
The formula for $c^{\pi_2}(\lambda)$ is new. 

\subsection{$\pi$-spherical transform}

An inversion formula for the $\pi$-spherical transform is given by 
Theorem~\ref{thm:inv}. We must shift the contour of integration from 
$\eta_1+\sqrt{-1}\mathfrak{a}^*$ to $\sqrt{-1}\mathfrak{a}^*$ and get a 
globally defined function on $\mathfrak{a}$. 
We refer to \cite[Ch. II, Ch. III]{A} for the general residue scheme 
(see also \cite{Oshima81a,BS,BS2,BS2000,BS3,BS4}). 

For $\pi=\text{triv}$ and 
$\pi_1$, $c^\pi(-\lambda)^{-1}$ is a regular function of $\lambda$ for 
$\text{Re}\,\lambda\in -\overline{\mathfrak{a}_+^*}$, hence
 the inversion formula is given by \eqref{eqn:invcont} for these 
small $K$-types (cf. \cite[Ch~IV, Theorem~7.5]{Hel2}, \cite[Corollary~7.6]{OS}). 

For $\pi=\pi_2$, there appear singularities during the contour shift and 
we must take account of residues. 
The function $c^{\pi_2}(-\lambda)^{-1}$ for $\text{Re}\,\lambda\in
\mathfrak{a}_+^*$ 
has singularities along 
lines $\lambda_\beta=-\frac12\,\,(\beta\in\varSigma^+_{\text{short}})$. 
Figure~1 illustrates singular lines 
$\lambda_{\alpha_1}=-\frac12,\,\lambda_{\alpha_1+\alpha_2}=-\frac12$, and 
$\lambda_{2\alpha_1+\alpha_2}=-\frac12$ 
 (dashed) for $c^{\pi_2}(-\lambda)^{-1}$ 
and $-\overline{\mathfrak{a}_+^*}$ 
(shaded region). 
Here $\alpha_1$ and $\alpha_2$ are the simple roots of $\varSigma^+$ ($||\alpha_1||<||\alpha_2||$).
These singular lines divide $-\overline{\mathfrak{a}_+^*}$ into 
the following four regions (indicated in Figure 1):
\begin{figure}
\begin{center}
\includegraphics[trim=0 105 0 105,width=9.2cm]{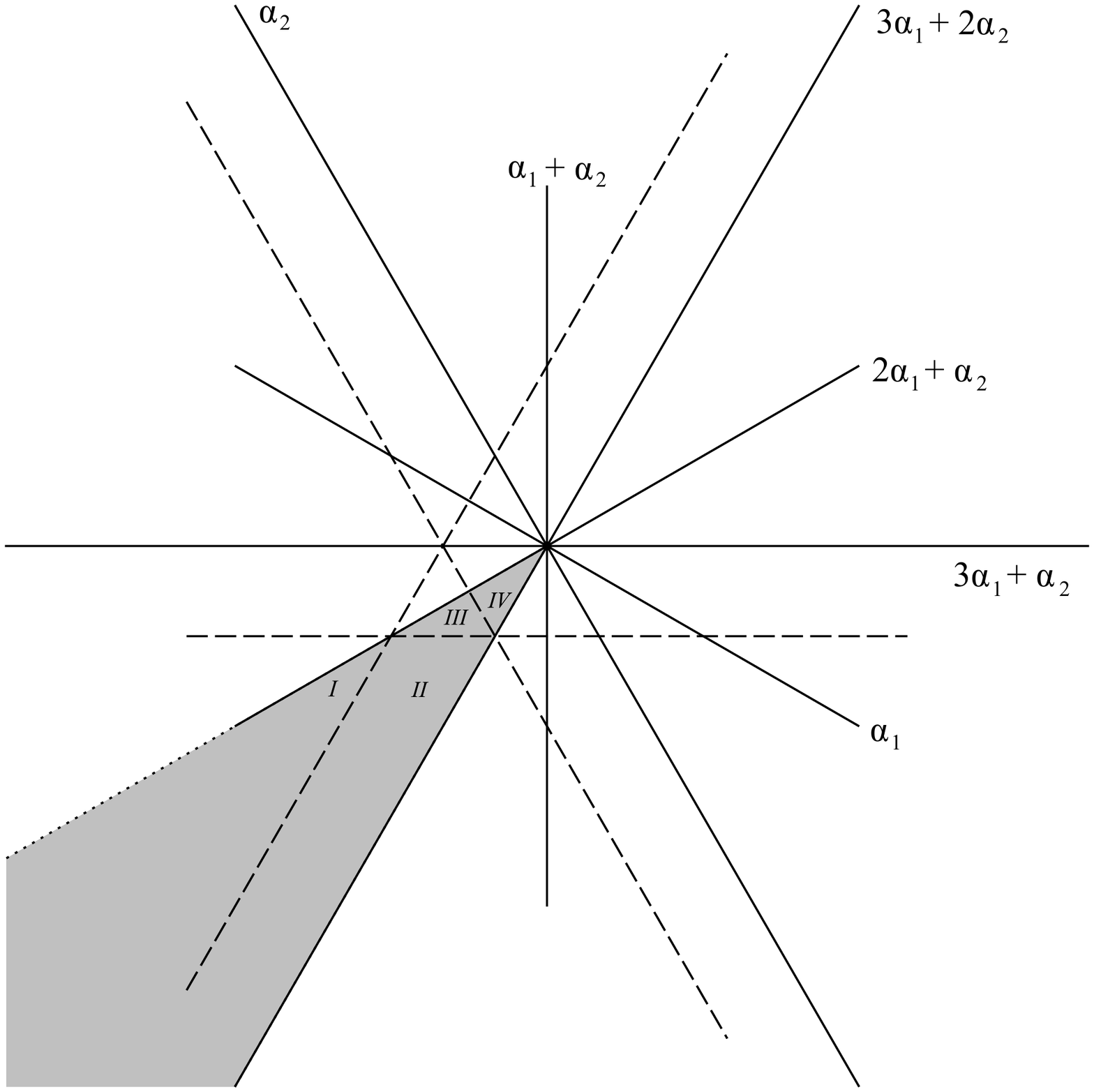}
\caption{singular lines}
\end{center}
\end{figure}
\begin{align*}
& \text{\emph{I}} \,:\, \lambda_{\alpha_1}<-\frac12,\quad \lambda_{\alpha_2}\leq 0, \\
& \text{\emph{II}} \,:\, -\frac12<\lambda_{\alpha_1}\leq 0,\quad \lambda_{\alpha_1+\alpha_2}<-\frac12, \\
& \text{\emph{III}} \,:\, -\frac12<\lambda_{\alpha_1+\alpha_2},\quad 
\lambda_{2\alpha_1+\alpha_2}<-\frac12,\quad \lambda_{\alpha_2}\leq 0, \\
& \text{\emph{IV}} \,:\,\lambda_{2\alpha_1+\alpha_2}>-\frac12,\quad \lambda_{\alpha_1}\leq 0,\quad 
\lambda_{\alpha_2}\leq 0. 
\end{align*}

First $\eta_1\in-\mathfrak{a}_+^*$ in \eqref{eqn:invft} 
lies in the region \emph{I}. 
We choose $\eta_2,\,\eta_3$, and $\eta_4$ 
in the regions \emph{II,\,III}, and \emph{IV}, respectively. We may take $\eta_4=0$. 
We shift the contour of 
integration from $\eta_1+\sqrt{-1}\mathfrak{a}^*$ to $\eta_2+\sqrt{-1}\mathfrak{a}^*$ 
and so on, and finally to $\eta_4+\sqrt{-1}\mathfrak{a}^*=
\sqrt{-1}\mathfrak{a}^*$, picking up residues. 
Define
\[
F^\vee_i(H)
=\int_{\eta_i+\sqrt{-1}\mathfrak{a}^*}f^\wedge (\lambda)\Phi_{\lambda}(H)
c^{\pi_2}(-\lambda)^{-1}d\lambda\quad (1\leq i\leq 4).
\]

We regard 
$(\lambda_{\alpha_1},\lambda_{3\alpha_1+2\alpha_2})\in\mathbb{C}^2$ as 
a coordinate on $\mathfrak{a}_\mathbb{C}^*$. 
Define 
\begin{equation}
c_1=\frac{||\alpha_1||\,||3\alpha_1+2\alpha_2||}{4}.
\end{equation}
For $\eta\in\mathfrak{a}^*$, the normalized measure $d\lambda$ on $\eta+\sqrt{-1}\mathfrak{a}^*$ is given by
\[
d\lambda=\frac{c_1}{(2\pi\sqrt{-1})^2}d\lambda_{\alpha_1}d\lambda_{3\alpha_1+2\alpha_2}.
\]

First we change the contour of integration of $F^\vee(H)=F^\vee_1(H)$ from $\eta_1+\sqrt{-1}\mathfrak{a}^*$ 
to $\eta_2+\sqrt{-1}\mathfrak{a}^*$ with respect to the integration in the variable 
$\lambda_{\alpha_1}$. 
By the explicit formula of $c^{\pi_2}(\lambda)$ in Theorem~\ref{thm:cfg2}, 
$f^\wedge (\lambda)\Phi_{\lambda}(H)
c^{\pi_2}(-\lambda)^{-1}$ has a possible simple pole during the change of 
the contour coming from the factor 
$(-\lambda_{\alpha_1}-\frac12)^{-1}$ of $c^{\pi_2}_{\alpha_1}(-\lambda)^{-1}$. 
Thus the difference $F^\vee_1(H)-F^\vee_2(H)$ is
\[
-\frac{c_1}{2\pi\sqrt{-1}}\int_{\mu+\sqrt{-1}\mathbb{R}}
\Res_{
\lambda_{\alpha_1}=-\frac12}\!\left(
f^\wedge (\lambda)\Phi_{\lambda}(H)
c^{\pi_2}(-\lambda)^{-1}\right)\!
d\lambda_{3\alpha_1+2\alpha_2}
\]
for some $\mu$ with $\mu_{\alpha_1}=-\frac12,\,\mu_{\alpha_2}<0$. 
Next we move $\mu_{3\alpha_1+2\alpha_2}$ to $0$ along the 
line $\lambda_{\alpha_1}=-\frac12$. 
Singularities coming from $(-\lambda_\beta-\frac12)^{-1}
\,\,(\beta=\alpha_1+\alpha_2,\,2\alpha_1+\alpha_2)$ are on the walls and 
they are canceled by $\varGamma(-\lambda_\alpha)^{-1}\,\,
(\alpha=\alpha_2,\,\alpha_1+\alpha_2, \text{respectively})$. Thus 
the integrand 
$\Res_{
\lambda_{\alpha_1}=-\frac12}\!\left(
f^\wedge (\lambda)\Phi_{\lambda}(H)
c^{\pi_2}(-\lambda)^{-1}\right)$ is regular for $\lambda_{3\alpha_1+2\alpha_2}\leq 0$.  
Hence we have 
\[
F^\vee_1(H)-F^\vee_2(H)=
-\frac{c_1}{2\pi\sqrt{-1}}\int_{\sqrt{-1}\mathbb{R}}
\Res_{
\lambda_{\alpha_1}=-\frac12}\!\left(
f^\wedge (\lambda)\Phi_{\lambda}(H)
c^{\pi_2}(-\lambda)^{-1}\right)\!
d\lambda_{3\alpha_1+2\alpha_2}.
\]
Similarly, we have
\begin{align*}
& F^\vee_2(H)-F^\vee_3(H)=
-\frac{c_1}{2\pi\sqrt{-1}}\int_{\sqrt{-1}\mathbb{R}}
\Res_{
\lambda_{\alpha_1+\alpha_2}=-\frac12}\!\left(
f^\wedge (\lambda)\Phi_{\lambda}(H)
c^{\pi_2}(-\lambda)^{-1}\right)\!
d\lambda_{3\alpha_1+\alpha_2}, \\
& F^\vee_3(H)-F^\vee_4(H)=
-\frac{c_1}{2\pi\sqrt{-1}}\int_{\sqrt{-1}\mathbb{R}}
\Res_{
\lambda_{2\alpha_1+\alpha_2}=-\frac12}\!\left(
f^\wedge (\lambda)\Phi_{\lambda}(H)
c^{\pi_2}(-\lambda)^{-1}\right)\!
d\lambda_{\alpha_2}.
\end{align*}
By summing up and changing variables, we have
\begin{align*}
F_1^\vee & (H)-F_4^\vee (H) \\ & =-\frac{c_1}{2\pi\sqrt{-1}}
\!\!
\sum_{w\in\{e,s_1,s_2 s_1\}}\!
\int_{\sqrt{-1}\mathbb{R}}
\Res_{
\lambda_{2\alpha_1+\alpha_2}=-\frac12}\!\left(
f^\wedge (\lambda)\Phi_{w\lambda}(H)
c^{\pi_2}(-w\lambda)^{-1}\right)\!
d\lambda_{\alpha_2}. 
\end{align*}
Let $W^{2\alpha_1+\alpha_2}=\{e,s_1,s_2,s_1s_2,s_2s_1,s_2 s_1 s_2\}$. 
By changing variables, we have
\begin{align}
F_1^\vee & (H)  -F_4^\vee (H) \label{eqn:invtemp}
 \\ & =-\frac{c_1}{4\pi\sqrt{-1}}\!\!\!
\sum_{w\in W^{2\alpha_1+\alpha_2}}
\int_{\sqrt{-1}\mathbb{R}}
\Res_{
\lambda_{2\alpha_1+\alpha_2}=-\frac12}\!\left(
f^\wedge (\lambda)\Phi_{w\lambda}(H)
c^{\pi_2}(-w\lambda)^{-1}\right)\!
d\lambda_{\alpha_2} . \notag
\end{align}

Since $W^{2\alpha_1+\alpha_2}=\{w\in W\,|\,w(2\alpha_1+\alpha_2)\in\varSigma^+\}$, $c^{\pi_2}(w\lambda)|_
{\lambda_{2\alpha_1+\alpha_2}=-\frac12}=0$ for 
any $w\in W\setminus W^{2\alpha_1+\alpha_2}$ by Theorem~\ref{thm:cfg2}. 
Notice that the Harish-Chandra expansion \eqref{eqn:hcs2} is valid for 
$\lambda_{2\alpha_1+\alpha_2}=-\frac12,\,
\lambda_{\alpha_2}\in\sqrt{-1}\mathbb{R}\setminus\{0\}$. 
Hence 
\begin{equation}\label{eqn:hcstemp}
\varUpsilon^{\pi_2}(\phi_\lambda^{\pi_2})|_{\lambda_{2\alpha_1+\alpha_2}=-\frac12}
=\sum_{w\in W^{2\alpha_1+\alpha_2}}c^{\pi_2}(w\lambda)\Phi_{w\lambda}|_{\lambda_{2\alpha_1+\alpha_2}=-\frac12}
\end{equation}
for $\lambda_{\alpha_2}\in\sqrt{-1}\mathbb{R}\setminus\{0\}$. 

We write the $c$-function $c^{\pi_2}(\lambda)$ in Theorem~\ref{thm:cfg2} as 
\[
c^{\pi_2}(\lambda)=\frac{16}{\pi}c_l(\lambda)c_s(\lambda)
\]
with
\[
c_l(\lambda)=
\prod_{\alpha\in\varSigma_\text{\rm long}^+} 
\frac{\varGamma(\lambda_\alpha)}
{\varGamma(\lambda_\alpha+\frac12)}, 
\qquad 
c_s(\lambda)=
\prod_{\beta\in\varSigma_\text{\rm short}^+} 
 \frac{\left(\lambda_\beta-\frac12\right)\varGamma(\lambda_\beta)}
{\varGamma(\lambda_\beta+\frac32)} .
\]  
Notice that the functions  
$(c_l(\lambda)c_l(-\lambda))^{-1}$ and $(c_s(\lambda)c_s(-\lambda))^{-1}$ 
are $W$-invariant.
\begin{lem}
We have
\begin{equation}\label{eqn:restemp0}
(c_l(\lambda)c_l(-\lambda))^{-1}|_
{\lambda_{2\alpha_1+\alpha_2}=-\frac12}
=\frac{(4\lambda_{\alpha_2}^3-\lambda_{\alpha_2})\sin\pi\lambda_{\alpha_2}}
{16\cos\pi\lambda_{\alpha_2}}
\end{equation}
and 
\begin{equation}\label{eqn:restemp}
c_s(w\lambda)^{-1}|_
{\lambda_{2\alpha_1+\alpha_2}=-\frac12}
\Res_{
\lambda_{2\alpha_1+\alpha_2}=-\frac12}(c_s(-w\lambda)^{-1})
=\frac{36\lambda_{\alpha_2}^2-1}{32\pi}
\end{equation}
for any $w\in W^{2\alpha_1+\alpha_2}$.
\end{lem}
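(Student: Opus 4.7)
The plan is to reduce both identities to Gamma-function manipulations via the reflection formula, followed by a trigonometric collapse on the wall $\lambda_{2\alpha_1+\alpha_2}=-\tfrac12$. Applying $\Gamma(z)\Gamma(-z)=-\pi/(z\sin\pi z)$ and $\Gamma(\tfrac12+z)\Gamma(\tfrac12-z)=\pi/\cos\pi z$ to the factors in Theorem~\ref{thm:cfg2}, the long piece is immediate, and for the short piece the extra $(\lambda_\beta-\tfrac12)(-\lambda_\beta-\tfrac12)=\tfrac14-\lambda_\beta^2$ in the numerator cancels against $(\lambda_\beta+\tfrac12)(-\lambda_\beta+\tfrac12)$ extracted from $\Gamma(\pm\lambda_\beta+\tfrac32)=(\pm\lambda_\beta+\tfrac12)\Gamma(\pm\lambda_\beta+\tfrac12)$. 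One obtains
\[
(c_l(\lambda)c_l(-\lambda))^{-1}=\prod_{\alpha\in\varSigma_\text{long}^+}\!\bigl(-\lambda_\alpha\tan\pi\lambda_\alpha\bigr),\quad (c_s(\lambda)c_s(-\lambda))^{-1}=\prod_{\beta\in\varSigma_\text{short}^+}\!\bigl(-\lambda_\beta\tan\pi\lambda_\beta\bigr).
\]
Each factor is even in its root variable, and $W$ permutes the long (resp.\ short) roots up to sign, so both products are manifestly $W$-invariant.

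For \eqref{eqn:restemp0} I parametrize the wall by $t=\lambda_{\alpha_2}$; the $G_2$ linear relations then give $\lambda_{\alpha_1}=-(1+6t)/4$, $\lambda_{3\alpha_1+\alpha_2}=-\tfrac14-\tfrac{t}{2}$, and $\lambda_{3\alpha_1+2\alpha_2}=\tfrac{t}{2}-\tfrac14$. Substituting into the long product isolates, up to sign, the polynomial $t(t/2+1/4)(t/2-1/4)=(4t^3-t)/16$ multiplied by $\tan\pi t\cdot\tan(\tfrac\pi4+\tfrac{\pi t}2)\tan(\tfrac{\pi t}2-\tfrac\pi4)$. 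The tangent addition formula gives $\tan(\tfrac\pi4+\theta)\tan(\tfrac\pi4-\theta)=1$, and since $\tan(\tfrac{\pi t}2-\tfrac\pi4)=-\tan(\tfrac\pi4-\tfrac{\pi t}2)$ the last two tangents combine to $-1$, delivering the stated right-hand side.

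For \eqref{eqn:restemp} I invoke the $W$-invariance above to replace $(c_s(w\lambda)c_s(-w\lambda))^{-1}$ by $(c_s(\lambda)c_s(-\lambda))^{-1}$. For $w\in W^{2\alpha_1+\alpha_2}$ no factor of $c_s(w\lambda)$ vanishes at the wall for generic $t$ --- vanishing of the $\beta$-factor $(\lambda_{w^{-1}\beta}-\tfrac12)$ would force $w^{-1}\beta=-(2\alpha_1+\alpha_2)$, contradicting $w(2\alpha_1+\alpha_2)\in\varSigma^+$ --- so $c_s(w\lambda)^{-1}$ is regular there and can be pulled outside the residue, reducing the left-hand side of \eqref{eqn:restemp} to $\Res_{\lambda_{2\alpha_1+\alpha_2}=-1/2}\bigl((c_s(\lambda)c_s(-\lambda))^{-1}\bigr)$. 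Setting $u=\lambda_{2\alpha_1+\alpha_2}+\tfrac12$, the $\beta=2\alpha_1+\alpha_2$ factor becomes $-(1/2-u)\cot\pi u\sim -1/(2\pi u)$, contributing a residue $-\tfrac{1}{2\pi}$; the other two short-root factors evaluate at the wall to $\tfrac{36t^2-1}{16}\tan(\tfrac\pi4+\tfrac{3\pi t}2)\tan(\tfrac{3\pi t}2-\tfrac\pi4)$, and since these two tangent arguments differ by $\pi/2$ their product is $-1$. Multiplying out yields $(36t^2-1)/(32\pi)$.

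The main obstacle is purely bookkeeping: writing every $\lambda_\alpha$ correctly in terms of $t$ on the wall, and tracking signs through several reflection and complement-tangent identities. Once one observes that $(c_lc_l)^{-1}$ and $(c_sc_s)^{-1}$ reduce to $W$-invariant products of $-\lambda\tan\pi\lambda$, both identities collapse under the single elementary miracle $\tan(\pi/4+\theta)\tan(\pi/4-\theta)=1$ together with its partner $\tan A\tan(A-\pi/2)=-1$.
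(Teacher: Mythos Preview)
Your argument is correct and follows essentially the same route as the paper: both proofs rewrite each Gamma-ratio factor as $-\lambda_\alpha\tan\pi\lambda_\alpha$ (equivalently $-\cos\pi z/(z\sin\pi z)$), invoke $W$-invariance of $(c_s(\lambda)c_s(-\lambda))^{-1}$ to reduce to $w=e$, isolate the residue $-1/(2\pi)$ from the $2\alpha_1+\alpha_2$ factor, and collapse the remaining two tangent factors via the complementary-angle identity. Your treatment is slightly more explicit in that you carry out \eqref{eqn:restemp0} in full (the paper defers it as analogous) and you spell out why $c_s(w\lambda)^{-1}$ is regular on the wall before pulling it outside the residue; the ``up to sign'' bookkeeping in the long-root computation is a bit informal but resolves correctly.
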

\begin{proof}
We show only \eqref{eqn:restemp} because \eqref{eqn:restemp0} can be deduced in a similar way.
Since the left hand side of \eqref{eqn:restemp} is the residue of  
the $(c_s(\lambda)c_s(-\lambda))^{-1}$ 
as a function of $\lambda_{2\alpha_1+\alpha_2}$ 
at $\lambda_{2\alpha_1+\alpha_2}=-\frac12$,
it suffices to show \eqref{eqn:restemp} for $w=1$.
By elementary calculation we have
\[
 \frac{\left(z-\frac12\right)\varGamma(z)}
{\varGamma(z+\frac32)}
\cdot
 \frac{\left(-z-\frac12\right)\varGamma(-z)}
{\varGamma(-z+\frac32)}
=-\frac{\cos\pi z}{z\sin\pi z}\quad(z\in\bbC).
\]
Using
$\lambda_{\alpha_1}=\frac12\lambda_{2\alpha_1+\alpha_2}-\frac32\lambda_{\alpha_2}$
and
$\lambda_{\alpha_1+\alpha_2}=\frac12\lambda_{2\alpha_1+\alpha_2}+\frac32\lambda_{\alpha_2}$
we calculate
\begin{align*}
\Res_{
\lambda_{2\alpha_1+\alpha_2}=-\frac12}&((c_s(\lambda)c_s(-\lambda))^{-1})\\
&=\biggl.\biggl(-\frac{z\sin\pi z}{\cos\pi z}\biggr)\biggr|_{z=-\frac14-\frac32\lambda_{\alpha_2}}
\biggl.\biggl(-\frac{z\sin\pi z}{\cos\pi z}\biggr)\biggr|_{z=-\frac14+\frac32\lambda_{\alpha_2}}
\Res_{z=-\frac12}\biggl(-\frac{z\sin\pi z}{\cos\pi z}\biggr)\\
&=
\frac{1-36\lambda_{\alpha_2}^2}{16}
\cdot\frac{\sin\pi\Bigl(-\frac14-\frac32\lambda_{\alpha_2}\Bigr)
\sin\pi\Bigl(-\frac14+\frac32\lambda_{\alpha_2}\Bigr)}%
{\cos\pi\Bigl(-\frac14-\frac32\lambda_{\alpha_2}\Bigr)
\cos\pi\Bigl(-\frac14+\frac32\lambda_{\alpha_2}\Bigr)}
\cdot\biggl(-\frac1{2\pi}\biggr).
\end{align*}
In the final expression the second factor reduces to $1$.
\end{proof}

Thus we have the following inversion formula for $\pi_2$-spherical transform. 
\begin{thm}\label{thm:main}
For $f\in C_c^\infty(\mathfrak{a})^W$, we have
\begin{align*}
f(H)  =  \frac{1}{12} & \int_{\sqrt{-1}\mathfrak{a}^*}
  f^\wedge(\lambda)\varUpsilon^{\pi_2}
  (\phi^{\pi_2}_\lambda)(H)|c^{\pi_2}(\lambda)|^{-2}d\lambda \\
  &
  -\frac{c_1}{4\pi\sqrt{-1}}\int_{\sqrt{-1}\mathbb{R}}
(f^\wedge(\lambda)\varUpsilon^{\pi_2}(\phi^{\pi_2}_\lambda(H)))
|_{\lambda_{2\alpha_1+\alpha_2}=-\frac12}\,
p(\lambda_{\alpha_2})d\lambda_{\alpha_2}
\end{align*}
for $H\in \mathfrak{a}$, 
where 
\begin{align*}
p(\lambda_{\alpha_2}) & =
\Res_{
\lambda_{2\alpha_1+\alpha_2}=-\frac12}
(c^{\pi_2}(\lambda)^{-1}c^{\pi_2}(-\lambda)^{-1})
=\frac{\pi(36\lambda_{\alpha_2}^2-1)
(4\lambda_{\alpha_2}^3-\lambda_{\alpha_2})\sin\pi\lambda_{\alpha_2}}
 {2^{17}\cos\pi\lambda_{\alpha_2}}\\
& =-2^{-17}\pi\Bigl(36\left(\tfrac{\lambda_{\alpha_2}}{\sqrt{-1}}\right)^2+1\Bigr)
\Bigl(4\left(\tfrac{\lambda_{\alpha_2}}{\sqrt{-1}}\right)^2+1\Bigr)
\left(\tfrac{\lambda_{\alpha_2}}{\sqrt{-1}}\right)
\tanh \pi\left(\tfrac{\lambda_{\alpha_2}}{\sqrt{-1}}\right)
.
\end{align*}
\end{thm}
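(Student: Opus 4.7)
The plan is to finish the proof by combining the contour-shift identity \eqref{eqn:invtemp} with the Harish-Chandra expansions \eqref{eqn:hcs2} and \eqref{eqn:hcstemp}, together with the Lemma. By Theorem~\ref{thm:inv} we have $f(H)=F_1^\vee(H)$, and we decompose
\[
f(H)=F_4^\vee(H)+\bigl(F_1^\vee(H)-F_4^\vee(H)\bigr).
\]
The first summand will supply the continuous spectrum over $\sqrt{-1}\mathfrak{a}^*$, and the second, already made explicit by \eqref{eqn:invtemp}, will supply the residual term on the hyperplane $\lambda_{2\alpha_1+\alpha_2}=-\tfrac12$. No further contour manipulation is needed; the remaining work is purely algebraic repackaging of both summands into $W$-symmetric form.

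For $F_4^\vee(H)=\int_{\sqrt{-1}\mathfrak{a}^*}f^\wedge(\lambda)\Phi_\lambda(H)c^{\pi_2}(-\lambda)^{-1}d\lambda$ I would symmetrize over the Weyl group. Changing variables $\lambda\mapsto w\lambda$ preserves both $d\lambda$ and the $W$-invariant $f^\wedge(\lambda)$, so averaging over $w\in W$ gives
\[
F_4^\vee(H)=\frac{1}{|W|}\int_{\sqrt{-1}\mathfrak{a}^*}f^\wedge(\lambda)\sum_{w\in W}\Phi_{w\lambda}(H)\,c^{\pi_2}(-w\lambda)^{-1}\,d\lambda.
\]
Multiplying and dividing by $c^{\pi_2}(w\lambda)$ and pulling the $W$-invariant factor $\bigl(c^{\pi_2}(\lambda)c^{\pi_2}(-\lambda)\bigr)^{-1}$ outside the sum, the remaining $\sum_{w\in W}c^{\pi_2}(w\lambda)\Phi_{w\lambda}(H)$ collapses to $\varUpsilon^{\pi_2}(\phi^{\pi_2}_\lambda)(H)$ by \eqref{eqn:hcs2}. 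Since $c^{\pi_2}(-\lambda)=\overline{c^{\pi_2}(\lambda)}$ on $\sqrt{-1}\mathfrak{a}^*$ (evident from \eqref{eq:cfgroup}) and $|W|=12$ for $G_2$, this produces the $\frac{1}{12}|c^{\pi_2}|^{-2}$ term in the statement.

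For the residual term I would apply the same trick inside \eqref{eqn:invtemp}. Since $w\in W^{2\alpha_1+\alpha_2}$ means $w(2\alpha_1+\alpha_2)\in\varSigma^+$, both $c^{\pi_2}(w\lambda)$ and $\Phi_{w\lambda}(H)$ are regular, and $c^{\pi_2}(w\lambda)$ nonvanishing, on the line $\lambda_{2\alpha_1+\alpha_2}=-\tfrac12$; so writing
\[
\Phi_{w\lambda}(H)\,c^{\pi_2}(-w\lambda)^{-1}=c^{\pi_2}(w\lambda)\Phi_{w\lambda}(H)\cdot\bigl(c^{\pi_2}(\lambda)c^{\pi_2}(-\lambda)\bigr)^{-1},
\]
the residue falls only on the last $W$-invariant factor. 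Summing over $W^{2\alpha_1+\alpha_2}$ and invoking \eqref{eqn:hcstemp}, the $\Phi_{w\lambda}$-part reassembles at $\lambda_{2\alpha_1+\alpha_2}=-\tfrac12$ to $\varUpsilon^{\pi_2}(\phi^{\pi_2}_\lambda)(H)|_{\lambda_{2\alpha_1+\alpha_2}=-\tfrac12}$, leaving the residual weight
\[
p(\lambda_{\alpha_2})=\Res_{\lambda_{2\alpha_1+\alpha_2}=-\tfrac12}\bigl(c^{\pi_2}(\lambda)c^{\pi_2}(-\lambda)\bigr)^{-1}.
\]
Evaluating this via $c^{\pi_2}=\tfrac{16}{\pi}c_lc_s$ and the two identities of the Lemma yields the first closed form for $p$; a routine rewrite $\sin(\pi z)/\cos(\pi z)=-\sqrt{-1}\tanh(\pi z/\sqrt{-1})$ for $z\in\sqrt{-1}\mathbb{R}$ then produces the second.

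The main point to verify carefully is the joint regularity of $c^{\pi_2}(w\lambda)\Phi_{w\lambda}(H)$ along $\lambda_{2\alpha_1+\alpha_2}=-\tfrac12$ for $w\in W^{2\alpha_1+\alpha_2}$: the potential gamma pole of $c^{\pi_2}(w\lambda)$ at this line is absent precisely because $w$ keeps $2\alpha_1+\alpha_2$ positive, while the would-be poles of $\Phi_{w\lambda}$ along hyperplanes $\sigma_{n\alpha}$ miss this generic line in $\lambda_{\alpha_2}$. Everything else reduces to tracking the normalization constant $\tfrac{256}{\pi^2}\cdot\tfrac{1}{16\cdot 32}=\tfrac{1}{2^{17}\pi^{-1}}$ in the final denominator, which is where the only arithmetic slip is likely.
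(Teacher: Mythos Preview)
Your proposal is correct and follows essentially the same route as the paper: the paper establishes \eqref{eqn:invtemp}, \eqref{eqn:hcstemp}, and the Lemma and then simply asserts the theorem, while you spell out the combination---symmetrizing $F_4^\vee$ over $W$ via \eqref{eqn:hcs2} (exactly the argument behind \eqref{eqn:invcont}) and factoring each summand in \eqref{eqn:invtemp} as $c^{\pi_2}(w\lambda)\Phi_{w\lambda}\cdot(c^{\pi_2}(\lambda)c^{\pi_2}(-\lambda))^{-1}$ so that \eqref{eqn:hcstemp} reassembles the elementary spherical function and the Lemma evaluates the residue. The only blemish is the throwaway arithmetic in your final sentence (it should read $\tfrac{\pi^2}{256}\cdot\tfrac{1}{16\cdot32}=\tfrac{\pi^2}{2^{17}}$, which together with the $\pi^{-1}$ from the $c_s$-residue gives the stated $\pi/2^{17}$); the substantive argument is fine.
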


The Plancherel formula follows from Theorem~\ref{thm:main} by a 
standard argument as in the case 
of $\pi=\text{triv}$ (cf. the proof of \cite[Theorem~6.4.2]{GV} and 
\cite[Ch~IV Theorem~7.5]{Hel2}). 

\begin{cor}\label{cor:main}
For $f\in C_c^\infty(\mathfrak{a})^W$, we have
\begin{align*}
\frac{1}{12}
\int_\mathfrak{a}|f(H)|^2\delta_{G/K} & (H)dH  =  
\frac{1}{12}  \int_{\sqrt{-1}\mathfrak{a}^*}
  |f^\wedge(\lambda)|^2|c^{\pi_2}(\lambda)|^{-2}d\lambda \\
 - & \frac{c_1}{4\pi\sqrt{-1}}\int_{\sqrt{-1}\mathbb{R}}
|f^\wedge(\lambda)|_{\lambda_{2\alpha_1+\alpha_2}=-\frac12}|^2\,
p(\lambda_{\alpha_2})d\lambda_{\alpha_2}
\end{align*}
\end{cor}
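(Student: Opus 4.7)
The plan is to deduce Corollary~\ref{cor:main} from the pointwise inversion formula of Theorem~\ref{thm:main} by the classical polarisation argument (cf.\ \cite[Theorem~6.4.2]{GV}, \cite[Ch.~IV, Theorem~7.5]{Hel2}). Concretely, I would view Theorem~\ref{thm:main} as an identity for $f(H)$, multiply by $\overline{f(H)}\delta_{G/K}(H)$, integrate over $H\in\mathfrak a$, and exchange the order of integration by Fubini. The whole task then reduces, on each of the two $\lambda$-contours appearing in Theorem~\ref{thm:main}, to showing that the inner integral $\int_{\mathfrak a}\varUpsilon^{\pi_2}(\phi^{\pi_2}_\lambda)(H)\,\overline{f(H)}\,\delta_{G/K}(H)\,dH$ equals $(\#W)\,\overline{f^\wedge(\lambda)}=12\,\overline{f^\wedge(\lambda)}$; once this is in hand, a final division by $12$ makes the constants collapse exactly into the right-hand side of Corollary~\ref{cor:main}.

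Evaluating this inner integral rests on two ingredients. First, the reality property
\[
\overline{\varUpsilon^{\pi_2}(\phi^{\pi_2}_\lambda)(H)}=\varUpsilon^{\pi_2}(\phi^{\pi_2}_{\bar\lambda})(H)\qquad(H\in\mathfrak a,\ \lambda\in\mathfrak a_\bbC^*),
\]
which I would extract from the Eisenstein representation~\eqref{eq:eisenstein}: under $\End_MV\simeq\bbC$ the scalar $\varUpsilon^{\pi_2}(\phi^{\pi_2}_\lambda)(H)$ equals $\int_Ke^{(\lambda-\rho)(H(e^Hk))}(\chi_{\pi_2}/\dim V)(k\kappa(e^Hk)^{-1})\,dk$, and the normalised character of the self-dual two-dimensional representation of $\SU(2)$ takes real values on $K$. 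This lets me identify the inner integral with $(\#W)\,\overline{f^\wedge(-\bar\lambda)}$, and the second ingredient is to verify $f^\wedge(-\bar\lambda)=f^\wedge(\lambda)$ on each contour. By $W$-invariance of $f^\wedge$ this amounts to proving $-\bar\lambda\in W\lambda$. On the principal contour $\sqrt{-1}\mathfrak a^*$ this is immediate from $-\bar\lambda=\lambda$. On the residue line $L=\{\lambda_{2\alpha_1+\alpha_2}=-\tfrac12,\ \lambda_{\alpha_2}\in\sqrt{-1}\bbR\}$ the crucial observation is $(\alpha_2,2\alpha_1+\alpha_2)=-6+6=0$, so $s_{\alpha_2}$ fixes $(2\alpha_1+\alpha_2)^\vee$ and acts as $-1$ on $\alpha_2^\vee$; this gives $s_{\alpha_2}\lambda=\bar\lambda$ on $L$, and since $-1\in W$ for $G_2$ we obtain $-\bar\lambda=(-s_{\alpha_2})\lambda\in W\lambda$.

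The main technical point requiring care is the Fubini interchange, especially on the residue contour, which lies in a real-shifted hyperplane rather than a purely imaginary subspace. My plan is to handle it as in the trivial $K$-type case: Paley--Wiener applied to $f\in C_c^\infty(\mathfrak a)^W$ yields rapid decay of $f^\wedge(\lambda)$ in imaginary directions uniformly in bounded real shifts, the residue weight $p(\lambda_{\alpha_2})$ of Theorem~\ref{thm:main} has only polynomial growth on $\sqrt{-1}\bbR$, the $\Gamma$-function formulas of Theorem~\ref{thm:cfg2} yield polynomial bounds on $|c^{\pi_2}(\lambda)|^{-2}$ along vertical lines, and the Harish-Chandra-type majorisation $|\varUpsilon^{\pi_2}(\phi^{\pi_2}_\lambda)(H)|\le C(\lambda)\,\phi_0(e^H)$ holds uniformly for $\lambda$ in a vertical strip. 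Together these estimates give absolute convergence of the iterated integrals on both contours, and the polarisation argument then runs exactly as in \cite[Theorem~6.4.2]{GV}.
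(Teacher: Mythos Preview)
Your proposal is correct and is precisely the approach the paper takes: the paper's own proof is simply the remark that the Plancherel formula follows from Theorem~\ref{thm:main} by the standard polarisation argument for the trivial $K$-type, citing the same references \cite[Theorem~6.4.2]{GV} and \cite[Ch.~IV, Theorem~7.5]{Hel2}. Your write-up merely supplies the details (reality of $\varUpsilon^{\pi_2}(\phi^{\pi_2}_\lambda)$ via the real character, the verification $-\bar\lambda\in W\lambda$ on both contours using $(\alpha_2,2\alpha_1+\alpha_2)=0$ and $-1\in W$, and the Fubini justification) that the paper leaves implicit.
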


As we see in \S~\ref{subsec:g2prem}, no discrete spectrum appears in 
the inversion formula and the Plancherel formula. 
In addition to the most continuous spectrum, there 
is a contribution of a principal series representation associated with a 
maximal parabolic subgroup whose Levi part corresponds to a short restricted 
root. 

\section*{Acknowledgement}
The first author was supported by JSPS KAKENHI Grant Number 18K03346. 
The authors 
 thank anonymous reviewers for careful reading our manuscript and for giving useful comments.

\medskip

\end{document}